\documentclass{llncs}
\usepackage{graphicx}
\usepackage{enumerate,enumitem}
\usepackage{verbatim}
\usepackage{amsmath}
\usepackage{amsfonts}
\usepackage{url}
\usepackage{multirow}
\usepackage{simplemargins}
\setallmargins{1in}

\DeclareGraphicsExtensions{.pdf,.png,.eps}
\graphicspath{{figures/}}

\let\doendproof\endproof
\renewcommand\endproof{~\hfill\qed\doendproof}

{\bfseries}{\itshape}

\newcommand{\LL}{\mathcal{L}}
\newcommand{\I}{\oplus\oplus}
\newcommand{\II}{\ominus\oplus}
\newcommand{\III}{\ominus\ominus}
\newcommand{\IV}{\oplus\ominus}

\begin{document}
\title{Combinatorial and Geometric Properties of Planar Laman Graphs}

\author{
Stephen Kobourov\inst{1}\thanks{Research supported in part by NSF grants CCF-0545743 and CCF-1115971.}
\and Torsten Ueckerdt\inst{2}\thanks{Research was supported by GraDR EUROGIGA project No. GIG/11/E023.}
\and Kevin Verbeek\inst{3}\thanks{Research was supported by the Netherlands Organisation for Scientific Research (NWO) under project no.~639.022.707.}
}

\institute{
Department of Computer Science,
University of Arizona
\and
Department of Applied Mathematics, Charles University
\and
Department of Mathematics and Computer Science, TU Eindhoven\\
{\tt kobourov@cs.arizona.edu \qquad torsten@kam.mff.cuni.cz \qquad k.a.b.verbeek@tue.nl}
}
\maketitle
\pagestyle{plain}

\begin{abstract}
Laman graphs naturally arise in structural mechanics and rigidity theory. Specifically, they characterize minimally rigid planar bar-and-joint systems which are frequently needed in robotics, as well as in molecular chemistry and polymer physics.
We introduce three new combinatorial structures for planar Laman graphs: angular structures, angle labelings, and edge labelings. The latter two structures are related to Schnyder realizers for maximally planar graphs. We prove that planar Laman graphs are exactly the class of graphs that have an angular structure that is a tree, called \emph{angular tree}, and that every angular tree has a corresponding angle labeling and edge labeling.
\medskip

Using a combination of these powerful combinatorial structures, we show that every planar Laman graph has an L-contact representation, that is, planar Laman graphs are contact graphs of axis-aligned L-shapes.
Moreover, we show that planar Laman graphs and their subgraphs are the only graphs that can be represented this way.
\medskip

We present efficient algorithms that compute, for every planar Laman graph $G$, an angular tree, angle labeling, edge labeling, and finally an L-contact representation of $G$. The overall running time is $\mathcal{O}(n^2)$, where $n$ is the number of vertices of $G$, and the L-contact representation is realized on the $n\times n$~grid.
\end{abstract}

\newpage

\setcounter{page}{1}

\section{Introduction}\label{sec:introduction}
A \emph{contact graph} is a graph whose vertices are represented by geometric objects (like curves, line segments, or polygons), and edges correspond to two objects touching in some specified fashion. There is a large body of work about representing planar graphs as contact graphs. An early result is Koebe's 1936 theorem~\cite{Koebe36} that all planar graphs can be represented by
touching disks.


In the late 1990's Schnyder showed that maximally planar graphs contain rich combinatorial structure~\cite{s-epgg-90}. With an angle labeling and a corresponding edge labeling, Schnyder shows that maximally planar graphs can be decomposed into three edge disjoint spanning trees. This combinatorial structure can be transformed into a geometric structure to produce a straight-line crossing-free planar drawing of the graph with vertex coordinates on the integer grid. Later, de~Fraysseix {\em et al.}~\cite{FraysseixTContact} show how to use the combinatorial structure to produce a representation of planar graphs as $T$-contact graphs (vertices are axis-aligned $T$'s and edges correspond to point contact between $T$'s) and triangle contact~graphs.

We study the class of {\em planar Laman graphs} and show that we can find similarly powerful combinatorial structures. In particular, we show that every planar Laman graph $G$ contains an \emph{angular structure}---a graph on the vertices and faces of $G$ with certain degree restrictions---that is also a tree and hence called an \emph{angular tree}. We also show that every angular tree has a corresponding \emph{angle labeling} and \emph{edge labeling}, which can be thought of as a special Schnyder realizer~\cite{s-epgg-90}. Using a combination of these combinatorial structures we show that planar Laman graphs are \emph{L-contact graphs}, graphs that can be represented as the contacts of axis-aligned non-degenerate L's (where the vertices correspond to the L's and the edges correspond to non-degenerate point contacts between the corresponding~L's). As a by-product of our approach we obtain a new characterization of planar Laman graphs: a planar graph is a Laman graph if and only if it admits an angular tree. The L-contact representation can be computed in $\mathcal{O}(n^2)$ time and realized on the $n\times n$~grid, where $n$ is the number of vertices of $G$.





\smallskip
\noindent{\bfseries Related Work.} Koebe's theorem~\cite{Koebe36} is an early example of point-contact
representation and shows that a planar graph can be represented by
touching disks. Any planar graph also has a contact representation
where all the vertices are represented by triangles in 2D~\cite{FraysseixTContact}, or even cubes in 3D~\cite{Felsner11}.

Planar bipartite graphs can be represented by axis-aligned segment
contacts~\cite{CzyzowiczKU98,fop-rpgs-91,rt-rplbopg-86}.
Triangle-free planar graphs can be represented via
contacts of segments with only three slopes~\cite{CastroCDMN02}. Furthermore, every $4$-connected $3$-colorable planar graph and every $4$-colored planar graph without an induced $C_4$ using four colors can be represented as the contact graph of segments~\cite{fo-rcis-07}. More generally, planar Laman graphs can be represented with contacts of segments with arbitrary number of slopes and every contact graph of segments is a subgraph of a planar Laman graph~\cite{A+11}.

The class of planar Laman graphs is of interest due to the fact that
it contains several large classes of planar graphs (e.g.,
series-parallel graphs, outer-planar graphs, planar 2-trees). Laman graphs are also of interest in structural mechanics, robotics, chemistry and physics, due to their connection to rigidity theory, which dates back to the 1970's~\cite{Laman}.  A system of fixed-length bars and flexible joints connecting them is
minimally rigid if it becomes flexible once any bar is removed; planar Laman graphs correspond to rigid planar bar-and-joint systems~\cite{gss-cr-93,hors+-pmrgpt-05}.

While Schnyder realizers were defined for maximally planar graphs~\cite{schnyderposet89,s-epgg-90}, the notion generalizes to $3$-connected planar graphs~\cite{f-lspg-04}. Fusy's transversal structures~\cite{Fusy09} for irreducible triangulations of the 4-gon also provide combinatorial structure that can be used to obtain geometric results.
Both concepts are closely related to certain angle labelings. Angle labelings of quadrangulations and plane Laman graphs have been considered before~\cite{FelsnerHKO10}. However, for planar Laman graphs the labeling does not have the desired Schnyder-like properties.
In contrast, the labelings presented in this paper do have these properties.

\smallskip
\noindent{\bfseries Results and Organization.} In Section~\ref{sec:structures} we introduce three combinatorial structures for planar Laman graphs. We first show that planar Laman graphs admit an angular tree. Next, we use this angular tree to obtain a corresponding angle labeling and edge labeling.
In Section~\ref{sec:L-contact} we use a combination of these combinatorial structures to show that planar Laman graphs are L-contact graphs. We then describe an algorithm to compute the L-contact representation of a planar Laman graph $G$ in $\mathcal{O}(n^2)$ time on the $n\times n$ grid. The running time of our algorithm is dominated by the computation of an angular tree of $G$. Given an angular tree, the algorithm runs in $\mathcal{O}(n)$ time. We also provide a detailed example illustrating the constructive algorithm.


\pagebreak

%
%

\section{Combinatorial Structures for Planar Laman Graphs}\label{sec:structures}


Consider a graph $G = (V, E)$. For a subset of vertices $W \subseteq V$, let $G(W)$ be the subgraph of $G$ induced by $W$, and let $E(W)$ be the set of edges of $G(W)$.
%
\begin{definition}
A Laman graph is a connected graph $G=(V,E)$ with $|E|=2|V|-3$ and $|E(W)| \leq 2|W|-3$ for all $W \subset V$.
\end{definition}
%
%
%
Laman graphs admit a \emph{Henneberg construction}: an ordering $v_1,\ldots,v_n$ of the vertices such that, if $G_i$ is the graph induced $v_1,\ldots,v_i$, then $G_3$ is a triangle and $G_i$ is obtained from $G_{i-1}$ by one of the following~operations:
\begin{description}
\item{(${\bf H_1}$)} Choose two vertices $x$, $y$ from $G_{i-1}$ and add $v_i$ together with the edges $(v_i,x)$ and $(v_i,y)$.
\item{(${\bf H_2}$)} Choose an edge $(x,y)$ and a third vertex $z$ from $G_{i-1}$, remove $(x,y)$ and add $v_i$ together with the three edges $(v_i, x)$, $(v_i, y)$, and $(v_i, z)$.
\end{description}
Planar Laman graphs also admit a {\em planar Henneberg construction}~\cite{hors+-pmrgpt-05}. That is, the graph can be constructed together with a plane straight-line embedding, with each vertex remaining in the position it is inserted. The two operations of a (planar) Henneberg construction are illustrated in Figure~\ref{fig:Henneberg}.
\begin{figure}[h]
  \centering
  \includegraphics{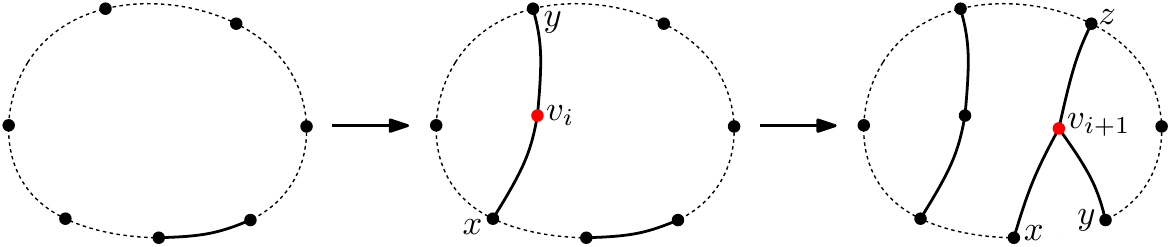}
  \caption{Two operations of a planar Henneberg construction: an ${\bf H_1}$-operation followed by an ${\bf H_2}$-operation.}
  \label{fig:Henneberg}
\end{figure}

\noindent Let $G$ be a planar Laman graph. From the fact that Laman graphs have $2|V|-3$ edges easily follows that $G$ contains a facial triangle. We choose an embedding of $G$ in which such a triangle $\{v_1,v_2,v_3\}$ is the outer face. We can assume that the outer face remains intact during a Henneberg construction, i.e., we never perform an ${\bf H_2}$-operation on an edge on the outer face. Let $v_1,v_2,v_3$ appear in this counterclockwise order around the outer triangle. We call $v_1,v_2$ the \emph{special vertices} and the outer edge $e^{*} = (v_1,v_2)$ the \emph{special edge} of $G$.

In the remainder of this section we describe three new combinatorial structures on $2$-connected plane graphs. Although we define the structures for general $2$-connected plane graphs, the most important structures (angular trees and edge labelings) exist only for plane Laman graphs.

\subsection{Angular Structure}\label{sec:angular-structure}

The \emph{angular graph $A_G$} of a plane graph $G$ is a plane bipartite graph defined as follows. The vertices of $A_G$ are the vertices $V(G)$ and faces $F(G)$ of $G$ and there exists an edge $(v,f)$ between $v \in V(G)$ and $f \in F(G)$ if and only if $v$ is incident to $f$. If $G$ is $2$-connected, then $A_G$ is a maximal bipartite planar graph and every face of $A_G$ is a quadrangle.
%
\begin{definition}
 An \emph{angular structure} of a $2$-connected plane graph $G$ with special edge $e^{*} = (v_1,v_2)$ is a set $T$ of edges of $A_G$ with the following two properties:
 \begin{description}
  \item[Vertex rule:] Every vertex $v \in V(G) \setminus \{v_1, v_2\}$ has exactly $2$ incident edges in $T$. Special vertices have no incident edge in $T$.
  \item[Face rule:] Every face $f \in F(G)$ has exactly $2$ incident edges not in $T$.
 \end{description}
\end{definition}
Let $S$ be the set of edges of $A_G$ that are not in $T$. The angular structure $T$ can be represented by orienting the edges of $A_G$ as follows. Every edge $(v,f)$ is oriented from $v$ to $f$ if $(v, f) \in T$, and from $f$ to $v$ if $(v, f) \in S$. This way every vertex of $A_G$ has exactly two outgoing edges (except for the special vertices). Such orientations of a maximal bipartite planar graph are called \emph{2-orientations} and have been introduced by de Fraysseix and Ossona de Mendez~\cite{deFraysseix200157}. It is also possible to derive an angular structure of $G$ from a 2-orientation of $A_G$. 
%
\begin{lemma}[\cite{deFraysseix200157}]\label{lem:angular-structure}
Every maximal bipartite planar graph has a 2-orientation. Thus every $2$-connected plane graph has an angular structure.
\end{lemma}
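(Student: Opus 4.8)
The plan is to prove the statement in two parts, corresponding to its two sentences. The second sentence follows immediately from the first together with the remark just before the lemma: if $G$ is a $2$-connected plane graph, then $A_G$ is a maximal bipartite planar graph, a $2$-orientation of $A_G$ exists by the first sentence, and the orientation rule (orienting $(v,f)$ toward $f$ iff it lies in $T$) was already shown to translate $2$-orientations into angular structures and back. So the real content is showing that every maximal bipartite planar graph admits a $2$-orientation, i.e.\ an orientation in which every vertex except the two special ones has out-degree exactly $2$.

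For the main part I would use a discharging / counting argument combined with an inductive construction. First I would check that the target out-degrees are even feasible: if $A_G$ has $p$ white vertices (the $V(G)$ side) and $q$ black vertices (the $F(G)$ side), then since $A_G$ is a maximal bipartite planar graph every face is a quadrilateral, so $|E(A_G)| = 2(p+q) - 4$ by Euler's formula. Asking out-degree exactly $2$ at all vertices except two gives $2(p+q) - 2$ edges, so we must instead ask out-degree $2$ at all but two designated vertices and out-degree $0$ at those two — which matches $2(p+q-2) = 2(p+q)-4$ exactly. This confirms the special vertices $v_1,v_2$ must be sinks and pins down the combinatorics.

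The cleanest route to existence is to build the orientation by induction following a vertex-addition order of $A_G$, or equivalently to exploit a classical fact: an orientation of a graph with prescribed out-degrees $d(\cdot)$ exists if and only if for every vertex subset $U$ we have $|E(U)| \le \sum_{u \in U} d(u)$ (a Hall-type / flow feasibility condition, provable via Hakimi's theorem or a max-flow argument). So I would verify this inequality: for $U \subseteq V(A_G)$, $E(U)$ is the edge set of an induced subgraph of a bipartite planar graph, hence $|E(U)| \le 2|U| - \ell$ where $\ell$ accounts for the connected components / outer structure, while $\sum_{u\in U} d(u) = 2|U| - (\text{number of special vertices in } U)$. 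Since there are at most two special vertices, the bound $|E(U)| \le 2|U| - 4 \le 2|U| - |U \cap \{v_1,v_2\}|$ (with small-case adjustments when $U$ is tiny or $U$ misses a special vertex) gives feasibility, and Hakimi's theorem then yields the orientation. Alternatively — and this is probably how the cited paper does it — one inducts on the quadrangulation: peel off a degree-$2$ vertex or a vertex of a separating $4$-cycle, recurse, and reinsert, orienting the two new edges outward from the reinserted vertex; the face rule / quad structure guarantees such a reducible vertex always exists.

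The main obstacle I expect is handling the boundary bookkeeping cleanly: making sure the two special vertices end up as sinks rather than having out-degree $2$, and that the counting inequality is tight in exactly the right places (the outer quadrilateral of $A_G$, i.e.\ the two special vertices of $G$ together with the two outer faces adjacent to $e^*$). Getting the base case of the induction right and verifying that the inductive reinsertion never forces an out-edge into a would-be sink is where the care is needed; everything else is Euler's formula and a routine feasibility criterion. Since the lemma is attributed to de Fraysseix and Ossona de Mendez, I would cite~\cite{deFraysseix200157} for the $2$-orientation existence and only spell out the reduction from $2$-connected plane graphs to their angular graphs, which is the new (but easy) part.
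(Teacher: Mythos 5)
Your proposal is correct, but note that the paper does not actually prove this lemma: it is stated purely as a citation to de Fraysseix and Ossona de Mendez~\cite{deFraysseixOssona}, and the only content supplied by the paper itself is the translation between $2$-orientations of $A_G$ and angular structures of $G$ in the paragraph preceding the lemma, which you reproduce correctly. Your self-contained argument via the orientation-feasibility criterion (an orientation with prescribed out-degrees $d(\cdot)$ exists iff $\sum_v d(v)=|E|$ and $|E(U)|\le\sum_{u\in U}d(u)$ for every $U$) is a valid alternative to citing the result, and the key inequality is exactly the bipartite-planarity bound $|E(U)|\le 2|U|-4$ for $|U|\ge 3$, which dominates the required $\sum_{u\in U}d(u)=2|U|-2\,|U\cap\{v_1,v_2\}|$; you wrote $2|U|-|U\cap\{v_1,v_2\}|$ for this sum, dropping a factor of $2$, but since you pass through $2|U|-4$ the chain of inequalities still closes. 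The one small case you only wave at, $U=\{v_1,v_2\}$, where the required bound is $|E(U)|\le 0$, does need the explicit observation that the two special vertices are non-adjacent in $A_G$; this holds here because both lie on the $V(G)$ side of the bipartition of $A_G$. That is also the only point where the choice of sinks enters, so your argument in fact shows the slightly stronger statement that any two non-adjacent vertices of a maximal bipartite planar graph can serve as the two sinks of a $2$-orientation. (Here I have written the citation as in your text; in the compiled paper the relevant key is the one used in the lemma statement.)
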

If $G$ is a Laman graph, then $|F(G)| = |V(G)| - 1$ by Euler's formula. Thus every angular structure $T$ consists of exactly $2|V(G)| - 4$ edges and spans exactly $|V(A_G)|-2 = 2|V(G)|-3$ vertices. Hence, if $T$ is connected, then $T$ is a spanning tree of $V(A_G) \setminus \{v_1,v_2\}$. An angular structure that is a tree is called an \emph{angular tree}. In Figure~\ref{fig:angular-structures} two angular structures of the same plane Laman graph are shown -- one being an angular tree.


\begin{figure}[t]
 \centering
 \includegraphics{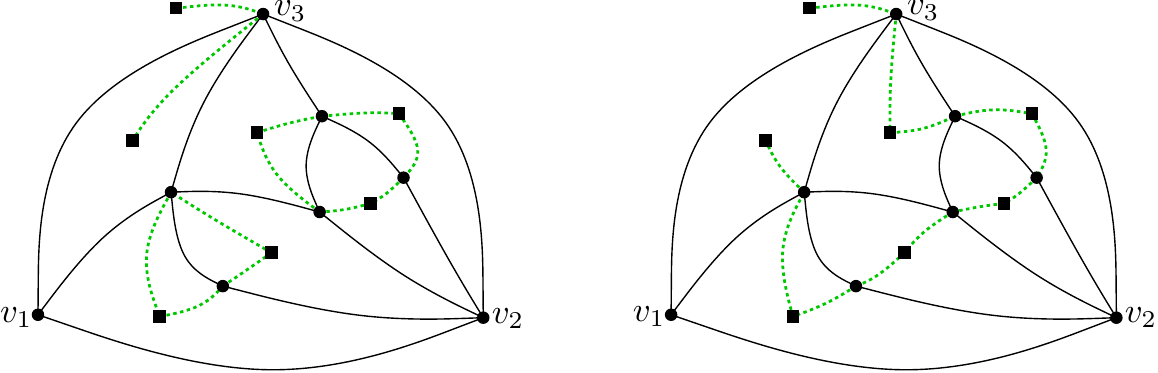}
 \caption{Two angular structures of the same plane Laman graph. The one on the right is an angular tree.}
 \label{fig:angular-structures}
\end{figure}

Next we show that every plane Laman graph admits an angular tree. Our proof is constructive and computes an angular tree along a planar Henneberg sequence of $G$. Consider a cycle $C$ in $A_G$ such that the edges of $C$ are alternatingly in $S$ and $T$. We say that $C$ is an \emph{alternating cycle}. We can perform a \emph{flip} on $C$ by removing all edges in $C \cap T$ from $T$ and adding all edges in $C \cap S$ to $T$. The resulting set of edges satisfies the properties of an angular structure. A flip corresponds to reversing the edges of a directed cycle in the corresponding $2$-orientation.

\begin{lemma}
\label{lem:fliptree}
Let $T$ consist of two connected components $A$ and $B$, where $A$ is a tree and $B$ contains a cycle. If we perform a flip on an alternating 4-cycle $C$ that contains an edge of $A$ and an edge of the cycle in $B$, then the resulting angular structure is a tree.
\end{lemma}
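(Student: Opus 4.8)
The plan is to track what the flip does to the two connected components $A$ and $B$, using the fact that a flip on an alternating cycle $C$ replaces $C\cap T$ by $C\cap S$ while leaving all other edges of $T$ untouched. Write $C = (a_1, f, a_2, g)$ as a $4$-cycle in $A_G$ alternating between vertex-nodes and face-nodes of $A_G$; by hypothesis one of its edges, say $a_1f$ or similar, lies in the tree component $A$, and another, the one "opposite" in the alternation, lies on the cycle of $B$. First I would record the parity structure: since $C$ is alternating, exactly two of its four edges are in $T$ (these get removed) and two are in $S$ (these get added). The edge removed from $A$ disconnects $A$ into two subtrees $A_1, A_2$; the edge removed from the cycle of $B$ breaks that cycle but, crucially, does not disconnect $B$ (removing an edge of a cycle keeps a graph connected), so $B$ remains connected and — this is the point — becomes acyclic at least with respect to that particular cycle.

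The key steps, in order, are: (i) verify that the resulting edge set $T'$ is again an angular structure — but this is already granted in the excerpt ("The resulting set of edges satisfies the properties of an angular structure"), so I may cite it. (ii) Count: an angular structure of a plane Laman graph always has exactly $2|V(G)|-4$ edges on $2|V(G)|-3$ nodes of $A_G$ (shown in the excerpt), so $T'$ is a tree if and only if $T'$ is connected, if and only if $T'$ is acyclic. Thus it suffices to prove one of connectivity or acyclicity, whichever is easier. (iii) Prove connectivity: the two edges added by the flip are the two $S$-edges of $C$; I would argue that these two new edges, together with the surviving edges of $C\cap T$-neighborhood, reconnect $A_1$, $A_2$, and $B$ into a single component. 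Concretely, the $4$-cycle $C$ visits both a node of $A$ and a node of $B$; after the flip, going around $C$ using the two new edges plus whatever old edges of $C$ remain, one can travel from the $A$-side to the $B$-side, so $A$ and $B$ merge; and since only one edge was deleted from $A$, the pieces $A_1,A_2$ stay attached to each other through this path as well. Hence $T'$ is connected, and by the count it is a spanning tree.

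The main obstacle I expect is step (iii): making the reconnection argument airtight requires being careful about which two edges of the $4$-cycle are deleted and which two are added, and checking that the added edges genuinely bridge the gap created in $A$ rather than landing entirely inside $B$. Because $C$ is a $4$-cycle with alternating membership, its two $T$-edges are non-adjacent (opposite) in $C$, and likewise its two $S$-edges are opposite; so deleting the two opposite $T$-edges and adding the two opposite $S$-edges amounts to "rotating" the cycle $C$ by one edge. I would argue that the four nodes of $C$ remain mutually connected in $T'$ through the two new edges, so the component of $A$ containing one node of $C$ and the component $B$ containing another node of $C$ are fused; combined with the no-new-cycle count this forces $T'$ to be a tree. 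An alternative, possibly cleaner, route is to phrase everything in the $2$-orientation language: the flip reverses a directed $4$-cycle, and one checks directly that reversing a $4$-cycle sharing an arc with a tree component and an arc with a cyclic component cannot create a new directed-or-undirected cycle while it does destroy the old one; I would present whichever of these two formulations yields the shorter verification.
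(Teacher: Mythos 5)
Your proposal is correct and follows essentially the same route as the paper: delete the two opposite $T$-edges of $C$ (one splits $A$ into $A_1,A_2$, the other lies on the cycle of $B$ and so leaves $B$ connected), observe that the two added $S$-edges attach $A_1$ to $B$ and $A_2$ to $B$, and conclude from the edge count $|T|=|V(A_G)|-3$ that connectivity implies $T$ is a tree. The only nitpick is your phrase that the four nodes of $C$ are ``mutually connected through the two new edges'' --- those two edges form a matching on the four nodes, so you also need the surviving connectivity of $B$ to link them, which is exactly how the paper phrases it.
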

\begin{proof}
If we remove the edges in $C \cap T$ from $T$, then $B$ becomes a tree, and we split up $A$ into two trees $A_1$ and $A_2$. The edges in $C \cap S$ connect $A_1$ to $B$ and $A_2$ to $B$. The resulting angular structure is connected and hence a tree.
\end{proof}

\begin{theorem}\label{thm:tree-if-Laman}
Every plane Laman graph $G$ admits an angular tree $T$ and it can be computed in $\mathcal{O}(|V(G)|^2)$~time.
\end{theorem}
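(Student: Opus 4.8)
The plan is to build the angular tree $T$ incrementally along a planar Henneberg construction $G_3, G_4, \ldots, G_n = G$, maintaining the invariant that after processing $G_i$ (with the outer triangle $\{v_1,v_2,v_3\}$ always kept as the outer face), the current edge set $T_i$ is an angular tree of $G_i$. First I would establish the base case: for the triangle $G_3$, the angular graph $A_{G_3}$ has three vertices and two faces (inner and outer), and one easily exhibits the unique angular structure forced by the vertex/face rules, checking it is connected. Then, for the inductive step, I would analyze separately how an $\mathbf{H_1}$-operation and an $\mathbf{H_2}$-operation affect the angular graph: adding a new vertex $v_i$ into a face $f$ of $G_{i-1}$ replaces $f$ by several smaller faces and adds $v_i$ with its incident edges in $A_{G_i}$.

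The key difficulty is that when I locally extend $T_{i-1}$ to cover the new vertex and the new faces, I must (a) satisfy the vertex rule at $v_i$ (exactly two edges of $T$ at $v_i$) and re-check the face rule at every face that changed, and (b) ensure the result stays connected. For $\mathbf{H_1}$, $v_i$ has degree $2$ in $G_i$ and splits one face into two (or creates the natural ear), so the local picture in $A_{G_i}$ is small and one can pick the two $T$-edges at $v_i$ so that $v_i$ hangs off the existing tree — this should keep connectivity for free. The genuinely hard case is $\mathbf{H_2}$: here an edge $(x,y)$ is deleted, which merges two faces, and $v_i$ is inserted with three edges, subdividing that merged region; the deletion of $(x,y)$ in $G$ corresponds to contracting/removing structure in $A_G$, and a naive local reassignment of $T$-edges may produce an angular structure that is disconnected — it will consist of a tree component plus a component containing a cycle. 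This is exactly the situation addressed by Lemma~\ref{lem:fliptree}: I would argue that whenever the local update breaks connectivity, the two components $A$ (tree) and $B$ (containing a cycle) admit an alternating $4$-cycle $C$ through an edge of $A$ and an edge of the cycle of $B$ — such a $4$-cycle exists because $A$ and $B$ share a face or a vertex of $A_G$ at the site of the operation — and performing the flip on $C$ restores a spanning tree by Lemma~\ref{lem:fliptree}.

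To make the existence of the required alternating $4$-cycle precise, I would use the quadrangular structure of $A_G$ (every face of $A_G$ is a $4$-cycle, by $2$-connectivity) together with the counting from Euler's formula already recorded before the theorem: for a Laman graph every angular structure has exactly $2|V|-4$ edges on $2|V|-3$ vertices, so "connected" and "tree" are equivalent, and an angular structure that is not a tree must contain a cycle while still having the right number of edges — forcing it to split into one acyclic component and one component carrying the cycle, which is the hypothesis of Lemma~\ref{lem:fliptree}. The main obstacle, then, is the careful case analysis of the $\mathbf{H_2}$-step and the verification that the flip site is always available; the rest is bookkeeping. Finally, the running time: a planar Henneberg sequence has $n-3$ steps, each local update and each flip touches $\mathcal{O}(n)$ edges in the worst case (finding the alternating cycle and reversing it), giving $\mathcal{O}(n^2)$ overall, which matches the claimed bound.
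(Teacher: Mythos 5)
Your overall strategy coincides with the paper's: build $T$ along a planar Henneberg construction, handle $\mathbf{H_1}$ locally, and repair the structure after an $\mathbf{H_2}$-operation by flipping an alternating $4$-cycle via Lemma~\ref{lem:fliptree}. However, two points in your sketch are not yet sound, and the second is the actual crux of the proof. First, the counting argument you invoke does not by itself force the failed angular structure to split into ``one acyclic component and one component carrying the cycle'': a graph with $n-1$ edges on $n$ vertices that is not a forest could a priori have several components and several independent cycles. What saves you is that the structure \emph{before} the $\mathbf{H_2}$-step was a tree and the update is local, so any cycle that appears must pass through the new vertex $v$, and there is only one such cycle; this is how the paper lands in the hypothesis of Lemma~\ref{lem:fliptree}. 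Second, the existence of the required alternating $4$-cycle ``at the site of the operation'' is exactly what needs proof and is not automatic from the quadrangular faces of $A_G$; the paper establishes it by a concrete two-case analysis. Assuming w.l.o.g.\ that $(v,f_1)\in T$ lies on the cycle: either $(z,f_2)\in T$, in which case $(z,f_1)\notin T$ and the $4$-cycle $(z,f_1),(z,f_2),(v,f_2),(v,f_1)$ is alternating and meets the hypotheses of the lemma; or $(z,f_2)\notin T$, in which case the face rule forces $(y,f_2)\in T$ and acyclicity of the previous tree forces $(y,f')\notin T$, so the $4$-cycle $(y,f_2),(y,f'),(v,f'),(v,f_2)$ works. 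You would need to supply this (or an equivalent) analysis to close the argument. A minor further point: the $\mathcal{O}(|V(G)|^2)$ bound must also cover the cost of computing the planar Henneberg construction itself, which the paper handles by invoking Bereg's algorithm.
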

\begin{proof}
 We build $G$ and $T$ simultaneously along a planar Henneberg construction, which can be found in $\mathcal{O}(|V(G)|^2)$ time using an algorithm of Bereg~\cite{Bereg05}. $T$ remains a tree during the construction. We begin with the triangle $\{v_1,v_2,v_3\}$ and $T$ containing the two edges incident to $v_3$ in $A_G$. Now assume we insert a vertex $v$ into a face $f$ of $G$, which is split into two faces $f_1$ and $f_2$.

 For an ${\bf H_1}$-operation, let $x$ and $y$ be the original vertices of the graph. We add an edge $(u, f_1)$ to $T$ if and only if $u$ is incident to $f_1$ and $(u, f) \in T$ before the operation. We do the same for $f_2$. Furthermore, we add edges $(v, f_1)$ and $(v, f_2)$ to $T$. If $(x, f) \in T$ before the operation, then we remove either $(x, f_1)$ or $(x, f_2)$ from $T$. Similarly, if $(y, f) \in T$ before the operation, then we remove either $(y, f_1)$ or $(y, f_2)$ from $T$. By choosing these edges correctly, we can ensure that $f_1$ and $f_2$ satisfy the degree constraints; see Fig.~\ref{fig:AngTreeProof}(left). This operation cannot introduce a cycle, so $T$ must remain a tree.

\begin{figure}[t]
  \centering
  \includegraphics{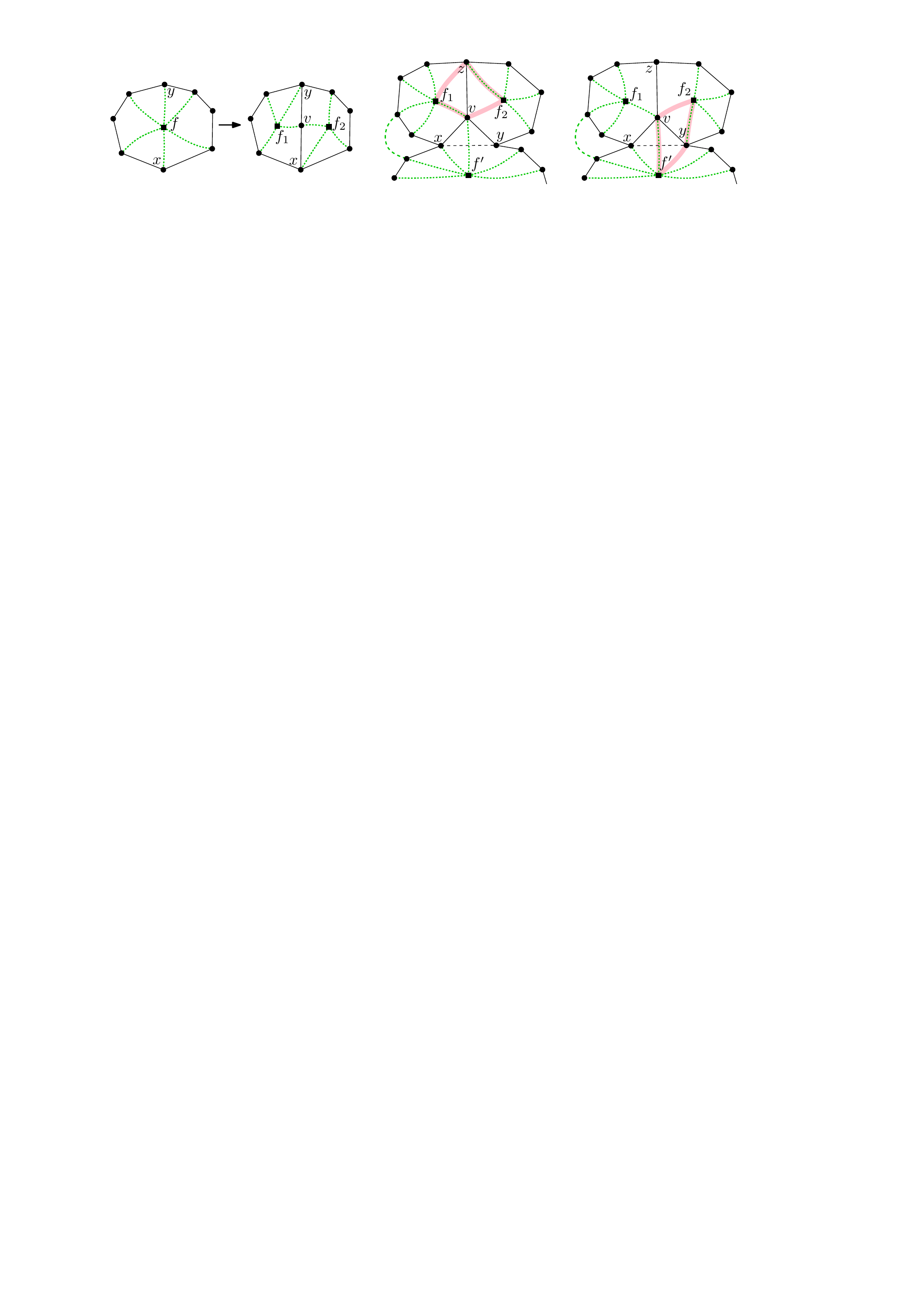}
  \caption{Left: updating $T$ (drawn dotted) for (${\bf H_1}$). Right: alternating cycles after (${\bf H_2}$).}
  \label{fig:AngTreeProof}
\end{figure}

 For an ${\bf H_2}$-operation, let $(x, y)$ and $z$ be the edge and vertex of the operation. Furthermore, let $f'$ be the face of $G$ that shares the edge $(x, y)$ with $f$ before the operation. We add an edge $(u, f_1)$ to $T$ if and only if $u$ is incident to $f_1$ and $(u, f) \in T$ before the operation (same for $f_2$). Furthermore, we add edges $(v, f')$ and either $(v, f_1)$ or $(v, f_2)$ to $T$. If $(z, f) \in T$ before the operation, then we remove either $(z, f_1)$ or $(z, f_2)$ from $T$. As above, we can choose the edges to ensure that $f_1$ and $f_2$ satisfy the degree constraints. However, this operation can introduce a cycle in $T$ containing the new vertex $v$ (if not, we are done). Assume w.l.o.g. that $f_1$ is part of this cycle, and hence $(v, f_1) \in T$; see Fig.~\ref{fig:AngTreeProof}(right).

If $(z, f_2) \in T$, then $(z, f_1) \notin T$, and the cycle formed by $(z, f_1)$, $(z, f_2)$, $(v, f_2)$, and $(v, f_1)$ is alternating and satisfies the requirements of Lemma~\ref{lem:fliptree}. We can flip this cycle to turn $T$ into a tree. If $(z, f_2) \notin T$, then $(y, f_2) \in T$ by the degree constraints on $f_2$. Also, $(y, f') \notin T$, for otherwise $T$ would contain a cycle before the operation. Thus, the cycle formed by $(y, f_2)$, $(y, f')$, $(v, f')$, and $(v, f_2)$ is alternating and satisfies the requirements of Lemma~\ref{lem:fliptree}. As before, we can flip this cycle to turn $T$ into a tree.

 At each step in the above procedure one vertex is added to $G$. The operations carried out to maintain the angular tree can be performed in $\mathcal{O}(1)$ time for an ${\bf H_1}$-operation and in $\mathcal{O}(|V(G)|)$ time for an ${\bf H_2}$-operation. Indeed, the bottleneck in the latter case is identifying the unique cycle in the intermediate angular structure. Thus the total runtime is $\mathcal{O}(|V(G)|^2)$, which concludes the proof.
\end{proof}

\noindent The following result is important for the construction of an L-contact representation of a plane Laman graph.

\begin{lemma}\label{lem:angular-matching}
If $T$ is an angular tree and $f$ is a triangular face of $G$, then $T$ contains a perfect matching between non-special vertices of $G$ and faces of $G$ different from $f$.
\end{lemma}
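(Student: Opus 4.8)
\smallskip
\noindent\emph{Proof plan.} The plan is to root the angular tree $T$ at the face $f$ and match every non-special vertex to its child in the rooted tree. First I would collect the relevant counts and structural facts. By Euler's formula $G$ has $|F(G)|=|V(G)|-1$ faces, so $A_G$ has $2|V(G)|-1$ vertices; deleting the two special vertices leaves the $2|V(G)|-3$ vertices spanned by $T$ --- namely the $|V(G)|-2$ non-special vertices together with all $|V(G)|-1$ faces. In particular every face, $f$ included, is a vertex of $T$, and the two sides of the matching we are after --- the non-special vertices and the faces other than $f$ --- both have size $|V(G)|-2$. Two local facts will do the work: (i) by the vertex rule every non-special vertex has degree exactly $2$ in $T$; and (ii) since $A_G$ (hence $T$) is bipartite with sides $V(G)$ and $F(G)$ while the special vertices carry no edge of $T$, every edge of $T$ joins a non-special vertex to a face, so every $T$-neighbour of a non-special vertex is a face.

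Next I would root $T$ at $f$. Since a non-special vertex $u$ is not the root and has degree exactly $2$, it has exactly one parent and exactly one child in the rooted tree, and both are faces by~(ii); write $c(u)$ for its child. I would then take
\[
 M=\bigl\{\,\{u,c(u)\}\;:\;u\text{ a non-special vertex of }G\,\bigr\}\subseteq T,
\]
and claim that $M$ is the required perfect matching.

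The verification is three short checks. $M$ is a matching: the non-special endpoints of its edges are pairwise distinct, and no face can be the child of two vertices. $M$ saturates every non-special vertex, by definition. $M$ saturates precisely the faces other than $f$: if $g$ is a face with $g\neq f$, then $g$ is not the root, so it has a parent in $T$, which by~(ii) is a non-special vertex $u$; being a $T$-neighbour of $u$ different from the parent of $u$, the face $g$ must be the child of $u$, that is $g=c(u)$, so $g$ is saturated --- whereas the root $f$ is the child of no vertex and is left unsaturated.

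I do not expect a genuine obstacle here; the single point that needs care is the bipartite bookkeeping in fact~(ii), namely that ``special vertices have no incident edge in $T$'' forces the $T$-neighbourhood of each non-special vertex to consist of two faces, which is exactly what makes the rooting argument go through. I also note that the argument never uses triangularity of $f$: triangularity only makes $f$ a leaf of $T$, which is convenient for a leaf-stripping reformulation of the same matching and is the form in which the statement is applied later, but the perfect matching in fact exists for every choice of the face $f$.
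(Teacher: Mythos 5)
Your proof is correct and is essentially the paper's argument: the paper deletes the leaf $f$, orients $T$ toward the unique neighbour $v$ of $f$, and matches each face $f'\neq f$ to the head of its outgoing edge, which is exactly the parent/child matching you obtain by rooting $T$ at $f$. Your closing remark that triangularity of $f$ is not actually needed (it only serves to make $f$ a leaf, which the paper's phrasing leans on but the rooting argument does not) is accurate, but does not change the approach.
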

\begin{proof}
 Remove the vertex corresponding to $f$ (leaf in $T$) from $T$ and let $v$ be the non-special vertex with $(v,f) \in T$. Direct all edges of $T$ towards $v$. Now every face $f' \neq f$ has exactly one outgoing edge in $T$ and every non-special vertex has exactly one incoming edge in $T$. The desired matching can be obtained by matching each face different from $f$ to the unique endpoint $v \in V(G)$ of its outgoing edge in $T$.
\end{proof}

\subsection{Angle Labeling}\label{sec:angle-labeling}

Next we define a labeling of the angles of $G$, using the angular structure above; see Fig.~\ref{fig:Labeling}. This labeling for $2$-connected plane graphs is similar to the Schnyder angle labeling for maximally plane graphs.

\begin{definition}
 An \emph{angle labeling} of a $2$-connected plane graph $G$ with special edge $e^{*} = (v_1,v_2)$ is a labeling of the angles of $G$ by $1,2,3,4$, with the following two properties:
 \begin{description}
  \item[Vertex rule:] Around every vertex $v \neq v_1,v_2$, in clockwise order, we get the following sequence of angles: exactly one angle labeled $3$, zero or more angles labeled $2$, exactly one angle labeled $4$, zero or more angles labeled $1$. All angles at $v_1$ are labeled $1$, all angles at $v_2$ are labeled $2$.
  \item[Face rule:] Around every face, in clockwise order, we get the following sequence of angles: exactly one angle labeled $1$, zero or more angles labeled $3$, exactly one angle labeled $2$, zero or more angles labeled $4$.
 \end{description}
\end{definition}


\begin{theorem}\label{thm:angles-from-structure}
Every $2$-connected plane graph admits an angle labeling.
\end{theorem}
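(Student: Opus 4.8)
The plan is to derive an angle labeling directly from an angular structure $T$, whose existence is guaranteed for every $2$-connected plane graph by Lemma~\ref{lem:angular-structure}. Recall that $T$ corresponds to a $2$-orientation of the angular graph $A_G$: each edge $(v,f)$ is oriented $v\to f$ if $(v,f)\in T$ and $f\to v$ otherwise, and every non-special vertex (and every face) has out-degree exactly $2$. An angle of $G$ at a vertex $v$ inside a face $f$ sits, in $A_G$, inside a quadrangular face bounded by two consecutive edges at $v$ and two consecutive edges at $f$. So the natural idea is: the label of the angle $(v,f)$ should record the ``local pattern'' of how $T$ behaves on the two $A_G$-edges $(v,f)$ and $(v,f')$ incident to $v$ that bound this angle, or equivalently on the pair of edges of $A_G$ surrounding the angle.

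Concretely, I would fix, for each non-special vertex $v$, the cyclic sequence of its incident $A_G$-edges; exactly two are in $T$ (outgoing). Traversing clockwise around $v$, these two outgoing edges split the remaining edges into two arcs. Assign label $3$ to the first angle after one distinguished outgoing edge and label $4$ to the first angle after the other; label all angles strictly between them on one side by $2$ and on the other side by $1$. Dually, around each face $f$, exactly two incident $A_G$-edges are \emph{not} in $T$ (i.e., oriented $f\to v$, hence outgoing from $f$): these should delimit the unique angle labeled $1$ and the unique angle labeled $2$, with the two arcs in between labeled $3$ and $4$. The heart of the argument is to check that a single labeling simultaneously realizes both the vertex pattern and the face pattern — i.e., that the ``outgoing-at-$v$'' structure and the ``outgoing-at-$f$'' structure can be made consistent. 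This is exactly analogous to Schnyder's setup, where a local labeling derived from the $3$-orientation satisfies both a vertex condition and a face condition; here the $2$-orientation of the bipartite angular graph should play that role.

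The key steps, in order, are: (1) take an angular structure $T$ (Lemma~\ref{lem:angular-structure}) and the associated $2$-orientation of $A_G$; (2) define a candidate label in $\{1,2,3,4\}$ for each angle of $G$ as a function of whether the two bounding $A_G$-edges lie in $T$ or in $S$, together with an orientation convention that distinguishes the two "in $T$" edges at a vertex (one becomes the "$3$-side", one the "$4$-side") and the two "in $S$" edges at a face (one the "$1$-side", one the "$2$-side"); (3) verify the Vertex rule by walking clockwise around a non-special vertex $v$: between the two outgoing edges of $v$ we see a run of angles whose other bounding edge is incoming, which get a constant label, giving the pattern $3,2,\dots,2,4,1,\dots,1$; the special vertices $v_1,v_2$ get all-$1$ and all-$2$ by fiat, which is consistent because they have no outgoing edges in $A_G$; (4) verify the Face rule symmetrically by walking clockwise around a face, using that each face has exactly two edges in $S$; (5) check the boundary/consistency at the special edge $e^{*}=(v_1,v_2)$ and the outer face. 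The only real subtlety is orientation bookkeeping — making sure the clockwise convention at vertices and the clockwise convention at faces are compatible across every shared angle, so that the "$3$" chosen at $v$ is not forced to also be a "$1$" at the incident face, etc.

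I expect step (3)–(4)'s compatibility to be the main obstacle: one must choose, consistently over the whole graph, \emph{which} of the two outgoing edges at each vertex is the "start of the $3$-block" versus "start of the $4$-block" (and dually at faces), and show these choices can be made so that both rules hold at once. I would handle this either by a global argument (e.g., orient/root things using the special edge and propagate, as Schnyder does via the outer triangle) or, matching the paper's style, by an inductive construction along the planar Henneberg sequence: maintain the angle labeling as vertices are inserted by $\mathbf{H_1}$ and $\mathbf{H_2}$, re-labeling only the angles inside the split face and around the new vertex, and invoking the flip lemma (Lemma~\ref{lem:fliptree}) exactly where it was used to keep $T$ a tree. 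Since the statement of Theorem~\ref{thm:angles-from-structure} is only claimed for general $2$-connected plane graphs (not just Laman graphs), I would lean toward the direct non-inductive construction from an arbitrary angular structure, with the $2$-orientation viewpoint doing the work, and relegate the Laman-specific refinements (connectivity of $T$, the matching of Lemma~\ref{lem:angular-matching}) to the later edge-labeling results.
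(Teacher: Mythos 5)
Your proposal has the right skeleton and it matches the paper's starting point: take an angular structure $T$ from Lemma~\ref{lem:angular-structure}, view it as a $2$-orientation of $A_G$, identify each angle of $G$ with the corresponding edge $(v,f)$ of $A_G$, and let ``outgoing at $v$'' produce the labels $3,4$ and ``incoming at $v$'' the labels $1,2$. You also correctly identify the crux: one must decide, for every vertex, which of its two outgoing $A_G$-edges is the ``$3$'' and which is the ``$4$'' (and dually at every face, which of the two outgoing edges is the ``$1$'' and which the ``$2$''), in a way that makes the vertex rule and the face rule hold \emph{simultaneously}. But you stop there: you flag this as ``the main obstacle'' and offer two candidate strategies without carrying either out. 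That is a genuine gap, because this consistency is exactly the nontrivial content of the theorem. The paper closes it by invoking the notion of a \emph{separating decomposition} of de~Fraysseix and Ossona de Mendez~\cite{deFraysseix200157}: the edges of the $2$-orientation of the maximal bipartite plane graph $A_G$ can be $2$-colored red/blue so that around every vertex of $A_G$ one sees, cyclically, one outgoing red edge, the incoming red edges, one outgoing blue edge, the incoming blue edges (clockwise at vertices of $G$, counterclockwise at faces of $G$). With that coloring in hand the labeling is forced (incoming blue $\mapsto 1$, incoming red $\mapsto 2$, outgoing red $\mapsto 3$, outgoing blue $\mapsto 4$) and both rules follow immediately. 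Without it, your step~(2) is an unspecified ``orientation convention'' and steps~(3)--(4) do not go through.

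Two further concrete problems with your fallback plans. First, the Henneberg induction is unavailable here: Theorem~\ref{thm:angles-from-structure} is stated for all $2$-connected plane graphs, and Henneberg constructions (and Lemma~\ref{lem:fliptree}) are specific to Laman graphs, as you yourself note. Second, declaring the labels at $v_1$ and $v_2$ ``by fiat'' is not free: each face incident to $v_1$ or $v_2$ must still have exactly one angle labeled $1$ and one labeled $2$, so the all-$1$/all-$2$ behavior at the special vertices has to be \emph{derived} from the global coloring (in a separating decomposition $v_1$ and $v_2$ are the two roots, with all incident edges incoming in a single color), not imposed independently. A smaller slip: an angle of $G$ at $v$ in face $f$ corresponds to the single $A_G$-edge $(v,f)$, not to a pair of bounding $A_G$-edges; your later ``Concretely'' paragraph uses the correct picture, but the earlier framing is misleading.
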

\begin{proof}
 By Lemma~\ref{lem:angular-structure} every $2$-connected plane graph $G$ admits an angular structure, which corresponds to a $2$-orientation of the angular graph $A_G$.
The edges of a $2$-orientation can be colored in red and blue, such that the edges around each vertex $v$ are ordered as follows: one outgoing red edge, zero or more incoming red edges, one outgoing blue edge, zero or more incoming blue edges (the order is clockwise for $v \in V(G)$ and counterclockwise for $v \in F(G)$).
Such an orientation and coloring of the edges of a maximal bipartite planar graph is called a \emph{separating~decomposition}~\cite{deFraysseix200157}.

We now label each angle at a vertex $v$ of $G$ based on the color and orientation of the corresponding edge $(v,f)$ in the separating decomposition. If the edge is incoming at $v$ and colored blue, we label the angle $1$. If the edge is incoming at $v$ and colored red, we label the angle $2$. If the edge is outgoing at $v$ and colored red, we label the angle $3$. If the edge is outgoing at $v$ and colored blue, we label the angle $4$. It is now straightforward to verify that the vertex rule and face rule are implied by the order in which incident edges appear around each vertex in the separating decomposition.
\end{proof}

\noindent Note that the correspondence derived above between an angular structure $T$ and an angle labeling of $G$ is such that $(v, f) \in T$ if and only if the corresponding angle label is $3$ or $4$. Moreover, from an angle labeling one can derive the corresponding separating decomposition of $A_G$ and hence the corresponding angular structure. In particular, there is a bijection between angular structures of $G$ and angle labelings of $G$.

\subsection{Edge Labeling}\label{sec:edge-labeling}

Finally, we define an orientation and coloring of the edges of a $2$-connected plane graph $G$ based on an angular tree $T$ of $G$; see Fig.~\ref{fig:Labeling}.
This edge labeling for $2$-connected plane graphs is similar to the Schnyder edge labeling for maximally plane graphs.

\begin{figure}[b]
  \centering
  \includegraphics{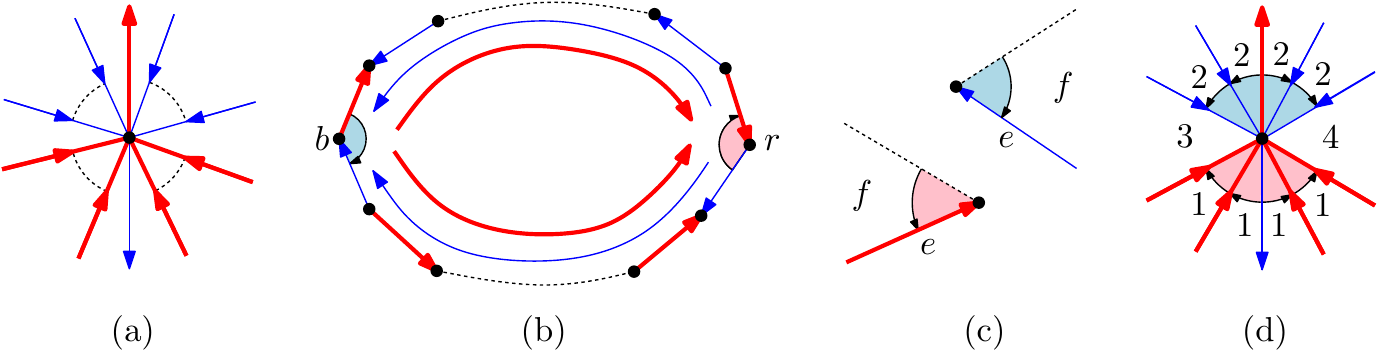}
  \caption{Vertex rule (a), face rule (b), and edge rule (c)-(d). Red edges are drawn thick.}
  \label{fig:Labeling}
\end{figure}
\begin{definition}
 An \emph{edge labeling} of a $2$-connected plane graph $G$ with special edge $e^{*} = (v_1,v_2)$ is an orientation and coloring of the non-special edges of $G$ with colors $1$ (red) and $2$ (blue), such that each of the following holds:
 \begin{description}
  \item[Vertex rule:] Around every vertex $v \neq v_1,v_2$, in clockwise order, we get the following sequence of edges: exactly one outgoing red edge, zero or more incoming blue edges, zero or more incoming red edges, exactly one outgoing blue edge, zero or more incoming red edges, and zero or more incoming blue edges. All non-special edges at $v_1$ are incoming and red, all non-special edges at $v_2$ are incoming and blue.
  \item[Face rule:] For every inner face $f$ there are two distinguished vertices $r$ and $b$. Every red edge on $f$ is directed from $b$ towards $r$, and every blue edge is directed from $r$ towards $b$. The vertices $r$ and $b$ are called the red and blue sink of $f$, respectively.
 \end{description}
 We denote the edge labeling by $(E_r,E_b)$, where $E_r$ and $E_b$ is the set of all red and blue edges, respectively.
\end{definition}
In an edge labeling $(E_r,E_b)$ of $G$ every non-special vertex has two outgoing edges. Together with the special edge this makes $2|V(G)|-3$ edges in total. Thus $|E(G)| = 2|V(G)|-3$ and $|F(G)| = |V(G)|-1$. Every inner face has exactly two sinks, which makes $2|F(G)| = |E(G)|-1$ in total. Indeed, there is a one-to-one correspondence between the non-special edges of $G$ and sinks of inner faces in $(E_r,E_b)$. We associate every directed edge $e$ with the inner face $f$ incident to it as illustrated in Fig.~\ref{fig:Labeling}(d). This way we have the following for every edge labeling $(E_r,E_b)$ of $G$.
%

\begin{description}
 \item[Edge rule:] Every non-special edge $e$ corresponds to one incident inner face $f$, such that the endpoint of $e$ is a sink of $f$ in the color of $e$.
\end{description}

\begin{theorem}\label{thm:edge-from-angle}
 If a $2$-connected plane graph admits an angular tree, then it admits an edge labeling.
\end{theorem}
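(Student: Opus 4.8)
The plan is to obtain the edge labeling by decoding the angle labeling that corresponds to the given angular tree. Recall from the discussion after Theorem~\ref{thm:angles-from-structure} that angular structures of $G$ are in bijection with angle labelings of $G$; let $\lambda$ be the angle labeling corresponding to $T$, so that $\lambda$ satisfies the vertex and face rules for angle labelings and $(v,f)\in T$ exactly when $\lambda(v,f)\in\{3,4\}$. Since $T$ is not merely an angular structure but a \emph{tree}, a short count pins down the edge count of $G$: every angular structure has $2|V(G)|-4$ edges and always spans all of $A_G$ except the two special vertices (each face, having size at least three, receives an edge of $T$ by the face rule, and each non-special vertex receives two), so being a spanning tree forces $|F(G)|=|V(G)|-1$, i.e.\ $|E(G)|=2|V(G)|-3$. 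This is precisely the edge count an edge labeling requires, so the tree hypothesis is used in the right place.

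First I would define the color and orientation of each non-special edge $e=uv$ \emph{locally}, from the labels of the angles incident to $e$ at $u$ and at $v$ (at most four labels, two in each of the two faces bounding $e$), following the classical dictionary between Schnyder angle and edge labelings: $e$ is outgoing at the endpoint at which the unique $3$- or $4$-labeled angle of that vertex's rule lies between $e$ and its clockwise neighbour --- red if that angle is the $3$, blue if it is the $4$ --- and $e$ is incoming at its other endpoint, its color there determined by whether the two incident angles of $e$ have labels in $\{2,3\}$ (red) or in $\{1,4\}$ (blue). Before anything else one checks this is well defined, i.e.\ that the two endpoints of $e$ agree on color and on orientation; this is forced by the face rule of $\lambda$, which inside each of the two faces bounding $e$ pins down the pair of labels $e$ carries.

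Next I would verify the vertex rule and face rule of the edge labeling. The vertex rule is essentially a transcription of the angle vertex rule: clockwise around a non-special vertex $v$ the labels read $3,2,\dots,2,4,1,\dots,1$; the $3$ and $4$ become the unique outgoing red and blue edges, the blocks of $2$'s and of $1$'s become blocks of incoming edges, and a short case check on the edges flanking these blocks yields exactly ``one outgoing red, incoming blue, incoming red, one outgoing blue, incoming red, incoming blue''; at $v_1$ (all labels $1$) and $v_2$ (all labels $2$) every non-special edge is flanked only by $1$'s, resp.\ $2$'s, hence incoming red, resp.\ blue. For the face rule, fix an inner face $f$; by the angle face rule its labels clockwise read $1,3,\dots,3,2,4,\dots,4$. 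I would take $r$ to be the vertex of $f$ with label $1$ and $b$ the vertex with label $2$, and then check by the local rule that every red edge on $f$ points towards $r$ and every blue edge towards $b$, so $r$ and $b$ are the only two sinks of $f$; the one-to-one correspondence between non-special edges and (inner face, sink) pairs, and the edge rule, then follow from the counting already given in the text.

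I expect the main obstacle to be the face rule: it is not enough to exhibit two candidate sinks per inner face --- one must show that \emph{every} red edge of the face is actually oriented towards the chosen red sink and \emph{every} blue edge towards the blue sink, consistently all the way around. This is where the exact shape of the angle face rule (one $1$, then only $3$'s, one $2$, then only $4$'s) is indispensable and a careful case analysis over the four angle labels around an edge is unavoidable. Everything else --- well-definedness and the vertex rule --- should reduce routinely to the vertex and face rules of $\lambda$ guaranteed by Theorem~\ref{thm:angles-from-structure}.
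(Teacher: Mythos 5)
Your plan is genuinely different from the paper's proof --- you try to decode the edge labeling \emph{locally} from the angle labeling, whereas the paper builds it \emph{globally} --- but as written it has a gap that I do not think is repairable in this form. You use the hypothesis that $T$ is a tree in exactly one place: to deduce $|F(G)|=|V(G)|-1$ and hence $|E(G)|=2|V(G)|-3$. That count is a property of $G$ alone and is shared by \emph{every} angular structure of a plane Laman graph; yet the paper explicitly notes (after Theorem~\ref{thm:edge-from-angle}, referring to Figure~\ref{fig:angular-structures}, left) that some non-tree angular structures of plane Laman graphs admit \emph{no} corresponding edge labeling. Since your orientation and coloring rules are purely local functions of the angle labeling $\lambda$, which exists for every angular structure by Theorem~\ref{thm:angles-from-structure}, a complete version of your argument would manufacture an edge labeling from any angular structure --- which is false. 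The step that actually breaks is the well-definedness of the orientation. The count shows that the $2|V(G)|-4$ ``outgoing'' designations (one per label-$3$ angle and one per label-$4$ angle) equal the number of non-special edges, but nothing local prevents a single edge from being designated as outgoing by \emph{both} endpoints (forcing some other edge to be designated by neither). Your appeal to the face rule cannot close this: the two designating angles of an edge $e=uv$ (the angle clockwise after $e$ at $u$, and the angle clockwise after $e$ at $v$) lie in the two \emph{different} faces bounding $e$, and the face rule only constrains angles within one face; moreover it fixes the multiset of labels around a face, not which label sits at which end of a given edge. Ruling out double designation requires a global acyclicity argument, and that is exactly where the paper spends the tree hypothesis: it splits each non-special vertex $v$ into $v^1,v^2$ along its two $T$-edges, uses acyclicity of $T$ to conclude that the resulting graph $H$ (with $2|V(G)|-2$ vertices and $2|V(G)|-3$ edges) is connected and hence a tree, and orients every edge towards the special edge $e^*$. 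In particular the outgoing red edge of $v$ is the parent edge of $v^2$ in $H$, which in general is \emph{not} the edge flanking the $3$-labeled angle, so the orientation really is not locally determined.

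A secondary issue is that the dictionary itself appears miscalibrated. Take the four-vertex Laman graph obtained from the outer triangle by one ${\bf H_1}$-step attaching $v_4$ to $v_1$ and $v_3$; its angular structure is unique and is a tree, and working out the angle labeling one finds that the $3$-labeled angle of $v_3$ is the angle of the quadrangular face, which lies clockwise after the edge $v_3v_2$ at $v_3$. Your rule then declares $v_3v_2$ outgoing and red, contradicting the requirement that all non-special edges at $v_2$ be incoming and blue. Swapping the roles of $3$ and $4$ (or of clockwise and counterclockwise) repairs this instance, and with that calibration your rule happens to reproduce the paper's labeling on this example, but whichever calibration you fix, the global consistency problem of the previous paragraph remains and must be addressed by an argument that uses the acyclicity of $T$ in more than a counting capacity.
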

\begin{proof}
 Let $G$ be a $2$-connected plane graph and $T$ be an angular structure of $G$. By Theorem~\ref{thm:angles-from-structure}, $G$ admits an angle labeling that corresponds to $T$, i.e., the angle of a face $f$ at a vertex $v$ is labeled $3$ or $4$ if and only if $(v,f) \in T$. We split every vertex $v$ in $G$, except for $v_1$ and $v_2$, into two vertices $v^1$ and $v^2$, in such a way that for $i=1,2$ all edges incident to an angle labeled $i$ are incident to $v^i$. We call the resulting graph $H$. In other words $H$ arises from $G$ by splitting each non-special vertex along its two edges in $T$. Thus, as $T$ is acyclic, $H$ is connected. Since $H$ consists of $2|V(G)|-2$ vertices ($v_1,v_2$ plus $2(|V(G)|-2)$ split vertices) and $|E(G)| = 2|V(G)|-3$ edges, $H$ is a tree.

 We orient every edge $e$ in $H$ towards the special edge $e^*$ of $G$. We color $e$ red if it is outgoing at some $v^2$ and blue if it is outgoing at some $v^1$. It is now straightforward to check, using the vertex rule and face rule of the angle labeling, that this orientation and coloring of all non-special edges is indeed a valid edge labeling of $G$.
\end{proof}

\noindent
 Not every edge labeling corresponds to an angular tree. Furthermore, some but not all angular structures that are not trees correspond to an edge labeling. For example, the angular structure in Figure~\ref{fig:angular-structures} (left) does not have a corresponding edge labeling. Hence, edge labelings of $G$ and angular structures (or angular trees) of $G$ are \emph{not} in bijection.

\begin{theorem}\label{thm:acyclic-and-trees}
 An edge labeling $(E_r,E_b)$ of a $2$-connected plane graph $G$ with special edge $e^{*} = (v_1,v_2)$ has the following two properties:
 \begin{enumerate}[label=(\roman*)]
  \item The graph $E_r \cup E_b^{-1}$ ($E_b \cup E_r^{-1}$) is acyclic, where $E_b^{-1}$ is $E_b$ with the direction of all edges reversed.\label{enum:acyclic}
  \item The graph $E_r$ ($E_b$) is a spanning tree of $G \setminus \{v_2\}$ ($G \setminus \{v_1\}$) with all edges directed towards $v_1$ ($v_2$).\label{enum:tree}
 \end{enumerate}
\end{theorem}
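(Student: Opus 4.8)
The plan is to deduce part~\ref{enum:tree} from part~\ref{enum:acyclic} by a short degree count, and to prove part~\ref{enum:acyclic} by contradiction: assuming a directed cycle in $D:=E_r\cup E_b^{-1}$, I would analyse the plane graph it encloses using Euler's formula and a corner count.

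\smallskip\noindent\emph{Reducing~\ref{enum:tree} to~\ref{enum:acyclic}.} By the vertex rule, $v_2$ is incident to no red edge, so $E_r$ consists of edges of $G\setminus\{v_2\}$; moreover every vertex of $V(G)\setminus\{v_1,v_2\}$ has exactly one outgoing red edge while $v_1$ has none, so $|E_r|=|V(G)|-2$ and $v_1$ is the unique vertex of out‑degree $0$ in $E_r$. Granting~\ref{enum:acyclic} (so that $E_r$, in particular, has no directed cycle), following outgoing edges from any vertex traces a directed path that cannot revisit a vertex and hence must end at $v_1$; thus $E_r$ is connected, spans $V(G)\setminus\{v_2\}$, and — having $|V(G)|-1$ vertices and $|V(G)|-2$ edges — is a spanning tree directed toward $v_1$. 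Swapping the colours and $v_1\leftrightarrow v_2$ yields the claim for $E_b$. So everything reduces to~\ref{enum:acyclic}.

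\smallskip\noindent\emph{Proof of~\ref{enum:acyclic}.} Suppose $D$ contains a directed cycle. Since each edge of $G$ occurs at most once in $D$ (the special edge not at all), such a cycle uses $\geq 3$ distinct edges and is a simple cycle $C$ of $G$; let $\overline R$ be the closed disk it bounds. By the vertex rule $v_1$ is a sink and $v_2$ a source of $D$, so neither lies on $C$; as both lie on the outer face of $G$, none of $v_1,v_2$, the edge $e^*$, nor the inner face incident to $e^*$ lies inside $\overline R$. Let $G_R$ be the plane graph consisting of all vertices, edges, and (inner) faces of $G$ contained in $\overline R$, carrying the orientation induced by $D$; it is connected, its outer face is the exterior of $C$, and its inner faces are precisely the faces of $G$ strictly inside $C$. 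Write $V_R=|V(G_R)|$, $E_R=|E(G_R)|$, and $F_I$ for the number of inner faces of $G_R$. I would now count, over the faces of $G_R$, how many incident vertices are \emph{local sinks} (both boundary edges of the face pointing toward the vertex). For an inner face $f$, the face rule says $\partial f$ is a red directed path from $b_f$ to $r_f$ followed by a blue directed path from $r_f$ to $b_f$; as $b_f\neq r_f$ both are nonempty, so in $D$ the boundary of $f$ becomes two internally disjoint directed paths from $b_f$ to $r_f$, giving $f$ exactly one local sink ($r_f$). The outer face, bounded by the directed cycle $C$, has none. Hence the total number of local‑sink corners is $F_I$. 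Counting the same corners at vertices: a vertex $v$ whose incoming edges form $b_v$ maximal arcs in its rotation contributes $\degree^-_{G_R}(v)-b_v$ local‑sink corners, and here $b_v\geq1$ for \emph{every} vertex of $G_R$ — a vertex on $C$ has its two cycle edges (one in, one out), while a vertex strictly inside $C$ is non‑special, has all its edges in $G_R$, and in $D$ its outgoing red edge leaves it but its outgoing blue edge (reversed) enters it. Therefore
\[
F_I \;=\; \sum_{v\in V(G_R)}\bigl(\degree^-_{G_R}(v)-b_v\bigr)\;=\;E_R-\sum_{v\in V(G_R)}b_v ,
\]
so $\sum_v b_v=E_R-F_I$. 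Plugging in Euler's formula $E_R=V_R+F_I-1$ for the connected plane graph $G_R$ gives $\sum_v b_v=V_R-1$, contradicting $b_v\geq1$ for all $V_R$ vertices. Hence $D$ is acyclic; the statement for $E_b\cup E_r^{-1}$ follows by symmetry.

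\smallskip\noindent I expect the main obstacle to be pinning down the local‑sink count for inner faces: one has to extract from the face rule that the red edges of a face form a single directed path $b_f\to r_f$ and the blue ones a single path $r_f\to b_f$ with $b_f\neq r_f$ (so both paths are nonempty and the reversed boundary has a unique source and sink), and one has to do the topological bookkeeping that keeps $v_1,v_2,e^*$ and the special inner face outside $\overline R$, so that the corner identity and Euler's formula apply with no exceptional faces or vertices.
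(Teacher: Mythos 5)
Your proposal is correct. Part~(ii) is handled exactly as in the paper: the paper also derives it from~(i) together with the fact that every non-special vertex has one outgoing edge of each colour (you merely spell out the degree count). For part~(i), however, you take a genuinely different route. The paper observes that $v_2$ is the unique source and $v_1$ the unique sink of $E_r\cup E_b^{-1}$, that by the face rule every inner face has exactly one local source and one local sink, and then invokes, without proof, the standard fact that a nesting-minimal directed cycle in a plane digraph is either facial or encloses a (global) source or sink; since facial cycles are not directed and $v_1,v_2$ lie on the outer face, no directed cycle can exist. You replace that cited lemma by a self-contained discharging argument: assuming a directed cycle $C$, you count local-sink corners of the enclosed subgraph $G_R$ in two ways (once per face, getting $F_I$; once per vertex, getting $E_R-\sum_v b_v$) and derive $\sum_v b_v=V_R-1$ from Euler's formula, contradicting $b_v\geq 1$. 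This is essentially an explicit proof of the very fact the paper takes for granted, so your version is longer but fully self-contained; the bookkeeping (that $v_1,v_2,e^*$ and the special inner face stay outside $\overline R$, and that every vertex of $G_R$ has an in- and an out-edge in $D$ restricted to $G_R$) is all verified correctly. One small remark: the ``main obstacle'' you flag at the end is not really an obstacle --- you do not need the red edges of a face to form a contiguous path. The face rule already says that along each of the two $b_f$--$r_f$ arcs of the facial cycle every red edge points toward $r_f$ and every blue edge toward $b_f$, so after reversing the blue edges each arc is a directed path from $b_f$ to $r_f$ regardless of how the colours interleave, which is all your corner count uses.
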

\begin{proof}
 Consider the graph $E_r \cup E_b^{-1}$. Since every vertex except for $v_1$ and $v_2$ has an outgoing red edge and an outgoing blue edge, there is only one source (all edges are outgoing at $v_2$) and one sink (all edges are incoming at $v_1$) in $E_r \cup E_b^{-1}$. By the face rule, every face has exactly one source (the blue sink) and one sink (the red sink). The face rule for the inner face of $G$ containing the special edge $e^{*}$ implies that the outer cycle as well has exactly one source ($v_1$) and exactly one sink ($v_2$). Every nesting minimal (the set of faces it circumscribes is inclusion minimal) directed cycle in a plane graph is either a facial cycle or has a source or sink in its interior. This proves~\ref{enum:acyclic}. Part~\ref{enum:tree} follows directly from part~\ref{enum:acyclic} and the fact that every non-special vertex has one outgoing edge in $E_r$ ($E_b$).
\end{proof}

\section{L-Contact Graphs}\label{sec:L-contact}

\begin{figure}[b]
  \centering
  \vspace{-.5\baselineskip}
  \includegraphics{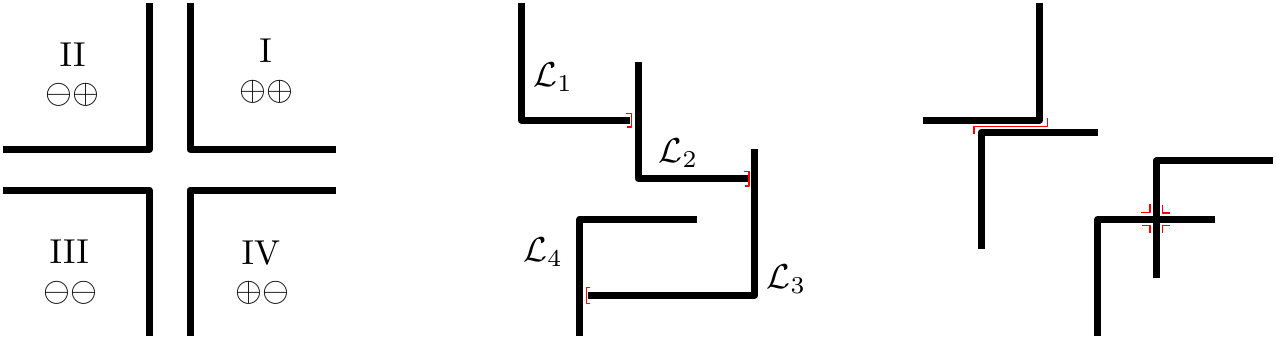}
  \caption{Left: possible L-shapes. Middle: valid contacts. Right: invalid contacts.}
  \label{fig:Lcontact}
\end{figure}

%
%
An \emph{L-shape} $\LL$ is a path consisting of exactly one horizontal segment and exactly one vertical segment. There are four different types of L-shapes; see Fig.~\ref{fig:Lcontact}(left). Two L-shapes $\LL_1$ and $\LL_2$ make \emph{contact} if and only if the endpoint of one of the two L-shapes coincides with an interior point of the other L-shape; see Fig.~\ref{fig:Lcontact}(middle). If the endpoint belongs to $\LL_1$, then we say that $\LL_1$ makes contact with $\LL_2$. Note that we do not allow contact using the bend of an L-shape; see Fig.~\ref{fig:Lcontact}(right).

A graph $G = (V, E)$ is an \emph{L-contact graph} if there exist non-crossing L-shapes $\LL(v)$ for each $v \in V$, such that $\LL(u)$ and $\LL(v)$ make contact if and only if $(u, v) \in E$. We call these L-shapes the \emph{L-contact representation} of $G$. We can match edges of L-contact graphs to endpoints of L-shapes. However, an endpoint that is bottommost, topmost, leftmost, or rightmost cannot correspond to an edge. We call an L-contact representation \emph{maximal} if every endpoint that is neither bottommost, topmost, leftmost, nor rightmost makes a contact, and there are at most three endpoints that do not make a contact. We assume that the bottommost, topmost, leftmost, and rightmost endpoints are uniquely defined.


%
%
%
%

In a maximal L-contact representation of a graph $G$, each inner face of $G$ is bounded by a simple rectilinear polygon, which is contained in the union of all L-shapes. Now each $\LL(v)$ has a right angle, which is a convex corner of the polygon corresponding to one incident face at $v$ and a concave corner corresponding to another incident face at $v$, provided the corresponding face is an inner face.
%
%
\begin{lemma}\label{lem:L-and-Seg}
If a graph $G$ has a maximal L-contact representation in which each inner face contains the right angle of exactly one $\LL$, then $G$ is a plane Laman~graph.
\end{lemma}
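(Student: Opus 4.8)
The plan is to show that a graph $G$ with the hypothesized maximal L-contact representation satisfies the two defining conditions of a Laman graph: $|E(G)| = 2|V(G)| - 3$, and $|E(W)| \le 2|W| - 3$ for every proper nonempty $W \subset V(G)$. Planarity of $G$ is immediate, since the L-shapes are non-crossing and contacts are point contacts, so the representation induces a plane embedding of $G$. For the edge count, I would count incidences between edges and endpoints of L-shapes: by maximality, every endpoint that is not bottommost, topmost, leftmost, or rightmost makes a contact, and at most three endpoints fail to make a contact. Each L-shape has exactly two endpoints, giving $2|V(G)|$ endpoints total, four of which (the extreme ones) cannot correspond to edges, and since each edge corresponds to exactly one contact, hence one endpoint, we get $|E(G)| = 2|V(G)| - 3$ once one checks that exactly three of the remaining endpoints are "free." (The extreme endpoints: bottommost, topmost, leftmost, rightmost are four endpoints, but the hypothesis says at most three endpoints fail to make contact, so one of these four must actually be shared or coincide with a contact; more carefully, the count $2|V|$ endpoints minus $3$ forced non-contacts yields $2|V| - 3$ edges.)

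For the hereditary inequality $|E(W)| \le 2|W| - 3$, I would restrict the representation to the sub-collection of L-shapes $\{\LL(v) : v \in W\}$. This is again a valid L-contact representation, of the graph $G(W)$, though no longer necessarily maximal. The key structural input is the face condition: in the full maximal representation each inner face contains the right angle (bend) of exactly one L-shape, so there is an injection from inner faces of $G$ to vertices of $G$, and combined with Euler's formula $|V| - |E| + |F| = 2$ this reproves $|F(G)| = |V(G)| - 1$, i.e. $|E(G)| = 2|V(G)| - 3$ — consistent with the above. For a proper subset $W$, the sub-representation of $G(W)$ uses $|W|$ L-shapes, hence at most $2|W|$ endpoints, of which at least four are extreme (bottommost, topmost, leftmost, rightmost among the shapes in $W$) and thus cannot carry an edge of $G(W)$; but actually we need to squeeze out $2|W| - 3$, so the argument is that at most $2|W|$ endpoints minus at least $3$ necessarily-free endpoints bound $|E(G(W))|$. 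The cleanest route is: in any L-contact representation (maximal or not) of a graph $H$ on $k$ vertices in which contacts are genuine point contacts, $|E(H)| \le 2k - 3$, because there are $2k$ endpoints, each edge uses a distinct endpoint, and at least three endpoints — e.g. the globally bottommost, leftmost, and one more extreme one — can never be the contact endpoint.

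The main obstacle I anticipate is making the counting of "free" (non-contact) endpoints airtight, especially proving that at least three endpoints in any sub-representation must fail to make a contact, rather than just the trivially clear statement that extreme endpoints are free — one must rule out, e.g., an endpoint lying in the relative interior of another L-shape while itself being extreme, and handle the bookkeeping of which of the four extreme directions genuinely forces a free endpoint. A secondary subtlety is verifying that the sub-collection for $W$ really realizes $G(W)$ and not some graph with fewer edges (it does, since contacts among $\LL(v)$, $v \in W$, are exactly the edges of $G$ within $W$), and that the face condition is only needed to pin down the exact value $2|V|-3$ for the whole graph while the inequality for subsets follows from the generic endpoint-counting bound. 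I would organize the write-up as: (1) planarity and the face-to-vertex injection giving $|E(G)| = 2|V(G)|-3$; (2) the generic lemma that any L-contact representation on $k$ vertices has at most $2k-3$ edges; (3) apply (2) to each $G(W)$; (4) conclude $G$ is Laman. Connectivity of $G$, also required by the definition, follows since a plane graph with a facial structure of this kind and $2|V|-3 \ge |V|-1$ edges arranged as a contact system is connected — or can be extracted from the edge/face counts together with Euler's formula applied to the (connected) union of all L-shapes.
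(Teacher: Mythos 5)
Your central ``generic lemma'' --- that any L-contact representation of a $k$-vertex graph has at most $2k-3$ edges --- is false, and the paper itself supplies the counterexample: $K_4$ is an L-contact graph (Fig.~\ref{fig:K4}), with $4$ vertices and $6 > 2\cdot 4-3$ edges. The flaw in your endpoint count is that the four extreme endpoints (bottommost, topmost, leftmost, rightmost) need not be distinct: the topmost endpoint can simultaneously be the leftmost one, and so on, so only two endpoints (say the bottommost and the topmost, which are always distinct because the two endpoints of a single $\LL$ lie at different heights) are guaranteed to be contact-free. The generic bound is therefore only $|E(H)| \le 2k-2$, which is exactly what $K_4$ attains. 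With that, your steps (2) and (3) collapse: the hereditary inequality $|E(W)| \le 2|W|-3$ does \emph{not} follow from endpoint counting alone, and your remark that the face condition is ``only needed to pin down the exact value $2|V|-3$ for the whole graph while the inequality for subsets follows from the generic endpoint-counting bound'' has the logic exactly backwards. Maximality is what gives $|E(G)| \ge 2|V(G)|-3$; the properness hypothesis (each inner face contains the right angle of exactly one $\LL$) is precisely what is needed to exclude subsets achieving $2|W|-2$. Your alternative route to $|E(G)| = 2|V(G)|-3$ via an injection from inner faces to vertices plus Euler's formula also falls short: an injection only yields $|F| \le |V|+1$, hence $|E| \le 2|V|-1$.

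The missing idea is the paper's contradiction argument, which is where the face hypothesis actually enters. Take an inclusion-minimal $W$ with $|E(W)| \ge 2|W|-2$; the (correct) endpoint count forces $|E(W)| = 2|W|-2$ and, by minimality, $2$-connectivity of $G(W)$. Enlarge $W$ to the set $W'$ of all vertices whose L-shapes lie inside the rectilinear polygon $\mathcal{P}$ bounding the outer face of $G(W)$; maximality gives $|E(W')| = 2|W'|-2$ again. Counting convex versus concave corners of $\mathcal{P}$ (convex $\le k+2$, concave $\ge k-2$, convex $=$ concave $+\,4$) forces both inequalities to be equalities, which shows that the right angle of $\LL(w)$ lies inside $\mathcal{P}$ for every $w \in W'$. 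Since $G(W')$ has only $|W'|-1$ inner faces by Euler's formula but $|W'|$ right angles to distribute among them, some inner face contains two right angles, contradicting properness. Without an argument of this kind that genuinely uses the face hypothesis on subgraphs, the lemma cannot be proved, since non-Laman graphs such as $K_4$ do admit maximal (but non-proper) L-contact representations.
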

\begin{proof}
 Consider a maximal L-contact representation of $G$ in which every inner face contains the right angle of exactly one $\LL$. By the definition of maximal L-contact representations, we get that $|E(G)| \geq 2|V(G)| - 3$. We need to show that $|E(W)| \leq 2|W| - 3$ for all subsets $W \subseteq V(G)$ of at least two vertices. For the sake of contradiction, let $W$ be a set ($|W| \geq 2$) with $|E(W)| \geq 2|W| - 2$. It follows that at most two endpoints of L-shapes corresponding to vertices in $W$ do not make contact when restricted to $W$. Since this holds for one bottommost endpoint and one topmost endpoint, we have $|E(W)| = 2|W| - 2$. Moreover, if we choose $W$ to be inclusion-minimal among all such sets, then $G(W)$ is $2$-connected; see thick L-shapes corresponding to $W$ in Fig.~\ref{fig:K4}.

 The outer face of $G(W)$ is bounded by a rectilinear polygon $\mathcal{P}$ with two additional ends sticking out. This polygon is highlighted in Fig.~\ref{fig:K4}. Consider the vertex set $W' \supseteq W$ of all vertices whose corresponding L-shapes are contained in $\mathcal{P}$, i.e., $G(W)$ is a subgraph of $G(W')$ and every inner face of $G(W')$ is an inner face of $G$. Since the representation is maximal we have $|E(W')| = |E(W)| + 2|W'\setminus W| = 2|W'| - 2$. Hence $G(W')$ has too many edges as well. We want to show that one inner face of $G(W')$ has two convex angles, which would then complete the proof.

 Let $k$ be the number of outer vertices of $G(W')$. Since $\mathcal{P}$ has only two endpoints sticking out, all but two of its convex corners are due to a single $\LL$, i.e.,
 \begin{displaymath}
  \# \textnormal{convex corners of } \mathcal{P} \leq k + 2.
 \end{displaymath}
 Each outer edge of $G(W')$, except for two, corresponds to a contact that is a concave corner of $\mathcal{P}$, i.e.,
 \begin{displaymath}
  \# \textnormal{concave corners of } \mathcal{P} \geq k - 2.
 \end{displaymath}
 In every rectilinear polygon the number of concave corners is exactly the number of its convex corners minus four. Thus we conclude that both inequalities above must hold with equality. In particular, every concave corner of $\mathcal{P}$ corresponds to a contact of two L-shapes and no concave corner is due to a single $\LL$. Moreover, every L-shape corresponding to an outer vertex in $G(W')$ forms a convex corner of $\mathcal{P}$. Hence for every $w \in W'$ the right angle of $\LL(w)$ lies inside $\mathcal{P}$.

 By Euler's formula $G(W')$ has precisely $|W'| - 1$ inner faces. Since there are $|W'|$ right angles among those inner faces, one inner face must have two right angles.
\end{proof}

\begin{figure}[t]
  \centering
  \includegraphics{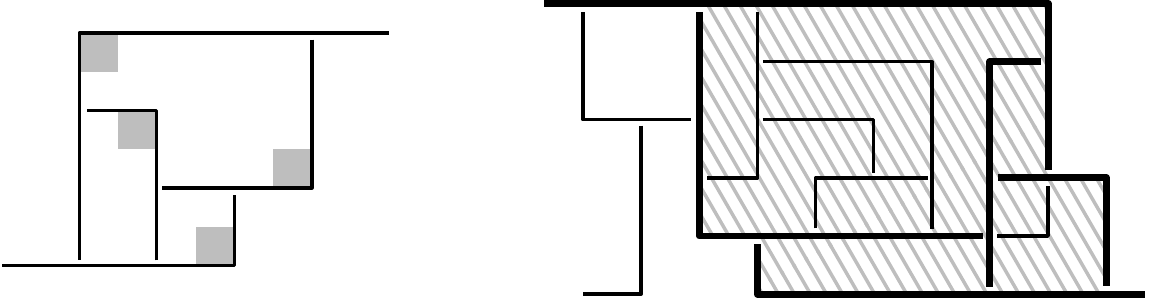}
  \caption{Left: $K_4$ is an L-contact graph but not a Laman graph. Right: Illustration of the proof of Lemma~\ref{lem:L-and-Seg}.}
  \label{fig:K4}
\end{figure}

\begin{definition}
A maximal L-contact representation is \emph{proper} if every inner face contains the right angle of exactly one $\LL$. An L-contact graph is \emph{proper} if it has a proper L-contact representation.
\end{definition}


\noindent Lemma~\ref{lem:L-and-Seg} states that all proper L-contact graphs are plane Laman graphs. The main result of the remainder of this section is the following.
\begin{theorem}\label{thm:Laman-are-L-contact}
Plane Laman graphs are precisely proper L-contact graphs.
\end{theorem}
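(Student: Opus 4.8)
The plan is to prove the two directions separately. The direction ``proper L-contact graphs are plane Laman graphs'' is already established in Lemma~\ref{lem:L-and-Seg}, so the real content is the converse: every plane Laman graph $G$ admits a proper L-contact representation, and moreover (for the ``precisely'' to be meaningful together with the earlier constructions) this representation can be produced constructively and on a small grid. I would build the representation from the combinatorial scaffolding developed in Section~\ref{sec:structures}: fix a planar Laman graph $G$ with special edge $e^*=(v_1,v_2)$, invoke Theorem~\ref{thm:tree-if-Laman} to get an angular tree $T$, and then Theorem~\ref{thm:edge-from-angle} to get the associated edge labeling $(E_r,E_b)$ with its vertex rule, face rule, and the derived edge rule. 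By Theorem~\ref{thm:acyclic-and-trees}, $E_r$ is a spanning tree of $G\setminus\{v_2\}$ oriented toward $v_1$ and $E_b$ is a spanning tree of $G\setminus\{v_1\}$ oriented toward $v_2$, and both $E_r\cup E_b^{-1}$ and $E_b\cup E_r^{-1}$ are acyclic. These two acyclic orientations play exactly the role the two Schnyder trees play in the $T$-contact construction of de~Fraysseix \emph{et al.}~\cite{FraysseixTContact}.

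Concretely, I would assign to each non-special vertex $v$ an $x$-coordinate and a $y$-coordinate derived from the two spanning trees: let the $x$-coordinate be governed by the red structure and the $y$-coordinate by the blue structure, so that each L-shape $\LL(v)$ has its horizontal arm running out along the unique outgoing red edge of $v$ and its vertical arm running out along the unique outgoing blue edge of $v$ (with the bend of $\LL(v)$ sitting at $(x(v),y(v))$, in the corner dictated by the type forced by the vertex rule). The incoming red edges at $v$ then attach their endpoints to the interior of the horizontal arm of $\LL(v)$, and the incoming blue edges attach to the interior of the vertical arm; the vertex rule's clockwise pattern (one outgoing red, incoming blues, incoming reds, one outgoing blue, incoming reds, incoming blues) is precisely what guarantees these attachments occur on the correct sides and in a consistent cyclic order so that the L-shapes do not cross. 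The special vertices $v_1,v_2$ and the outer triangle get placed as the extreme (leftmost/bottommost etc.) shapes. To get integer coordinates on the $n\times n$ grid and an $\mathcal{O}(n)$-time algorithm once $T$ is known, I would use a topological-ordering / counting argument along the acyclic orientations $E_r\cup E_b^{-1}$ and $E_b\cup E_r^{-1}$ (for instance, $x(v)$ = number of vertices reachable from $v$ in one of these DAGs, or an analogous face-counting scheme), exactly mirroring how Schnyder coordinates are extracted.

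The two things that must be checked, and which constitute the bulk of the proof, are: (1) \textbf{contacts are exactly the edges} --- every edge of $G$ produces a genuine (non-degenerate, non-bend) point contact of the right kind, and no two L-shapes meet anywhere else; and (2) \textbf{the representation is proper} --- every inner face contains the right angle of exactly one L-shape. For (1), the edge rule (every non-special edge $e$ corresponds to a unique incident inner face $f$ at which $e$'s endpoint is the sink of $f$ in the color of $e$) gives the bijection between edges and ``corners of faces,'' and one argues by induction along a topological order of $E_r\cup E_b^{-1}$ that the arms have the lengths needed to realize each such corner as a contact; the face rule (each inner face has a single red sink $r$ and single blue sink $b$, red edges directed $b\to r$, blue edges $r\to b$) forces each inner face to be a staircase-shaped rectilinear region bounded alternately by horizontal red pieces and vertical blue pieces, which is what prevents spurious crossings. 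For (2): the bend of $\LL(v)$ is the convex corner of exactly one incident face and a concave corner of the face on the other side of $v$'s two outgoing edges, and a counting argument — $|F(G)|-1 = |V(G)|-2$ inner faces versus $|V(G)|-2$ non-special vertices each contributing one convex bend, together with Euler's formula just as in the proof of Lemma~\ref{lem:L-and-Seg} — pins down ``exactly one'' per face. I expect the main obstacle to be item~(1): proving rigorously that the arm lengths coming from the coordinate assignment are simultaneously large enough to make every prescribed contact and small enough to avoid every forbidden overlap. This is the place where the acyclicity statements of Theorem~\ref{thm:acyclic-and-trees} do the heavy lifting, and I would handle it by a careful induction on a linear extension of $E_r \cup E_b^{-1}$, inserting L-shapes one at a time and maintaining the invariant that the partially built picture is a valid proper L-contact representation of the subgraph processed so far, with a detailed worked example (as the excerpt promises) to make the bookkeeping transparent.
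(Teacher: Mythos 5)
Your overall architecture coincides with the paper's: one direction is Lemma~\ref{lem:L-and-Seg}, and the converse is built from an angular tree (Theorem~\ref{thm:tree-if-Laman}), the associated edge labeling (Theorem~\ref{thm:edge-from-angle}), the acyclicity/spanning-tree facts of Theorem~\ref{thm:acyclic-and-trees}, and coordinates read off from topological orders of acyclic orientations. However, there are two genuine gaps. First, properness and the very shape of each L are not ``forced by the vertex rule'' and cannot be obtained from your counting argument: counting $|V(G)|-2$ bends against $|V(G)|-2$ inner faces only shows the average is one per face, not that each face receives exactly one. The paper's missing ingredient here is the perfect matching $M$ between non-special vertices and inner faces extracted from the angular tree (Lemma~\ref{lem:angular-matching}). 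This matching \emph{prescribes}, for each vertex $v$, which incident inner face contains the right angle of $\LL(v)$, which is what makes the representation proper by construction; it also defines the matched/unmatched angles that drive the type rule. Determining the quadrant type of each L is itself a nontrivial global consistency problem (resolved in the paper by the type rule together with Lemmas~\ref{lem:parity-type} and~\ref{lem:types-around-face}), and your proposal does not supply a mechanism for it.

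Second, your coordinate scheme --- topological orders of $E_r\cup E_b^{-1}$ and $E_b\cup E_r^{-1}$, or reachability counts in those DAGs --- is not enough to guarantee a crossing-free picture. The paper explicitly observes that the inequalities coming from the edges of $G$ alone are insufficient: the boundary of an inner face decomposes into three monotone staircases, and two non-adjacent staircases of the \emph{same} face can cross even when all edge inequalities hold. To exclude this, the paper introduces an auxiliary point $(x(f),y(f))$ for every inner face and builds the inequality graphs $D_r,D_b$ on $V(G)\cup F(G)$ with three extra edges per face (Table~\ref{tab:face-edges}), then proves acyclicity of these larger graphs (Lemma~\ref{lem:ineq_acyclic}). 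Your proposed induction along a linear extension of $E_r\cup E_b^{-1}$ does not address this specific failure mode, and without the face vertices the invariant you want to maintain (``the partial picture is a valid proper L-contact representation'') can break when the third staircase of a face closes up. So while the skeleton of your plan is the right one, the matching $M$, the type rule it enables, and the face-augmented inequality graphs are essential ideas that the proposal is missing.
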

To obtain an L-contact representation of a plane Laman graph, we require only the existence of an angular tree with the corresponding edge labeling. Thus, if a $2$-connected plane graph $G$ admits an angular tree, then it has a corresponding edge labeling by Theorem~\ref{thm:edge-from-angle}, and we can compute a proper L-contact representation of $G$. We obtain the following characterization of planar Laman graphs as a by-product of our approach.

\begin{theorem}\label{thm:Laman-AngTree}
A planar $2$-connected graph is a Laman graph if and only if it admits an angular tree.
\end{theorem}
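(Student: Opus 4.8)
\textbf{Proof plan for Theorem~\ref{thm:Laman-AngTree}.}
The plan is to establish the two directions separately. The forward direction---every planar $2$-connected Laman graph admits an angular tree---is already essentially in hand: it is exactly the content of Theorem~\ref{thm:tree-if-Laman}, once we fix an embedding with a facial triangle as the outer face (such a triangle exists because a Laman graph has $2|V|-3$ edges, hence a face of size three). So the real work is the converse: if a planar $2$-connected graph $G$ admits an angular tree, then $G$ is a Laman graph. Here I would argue purely by counting, using the structural consequences developed earlier in the paper.

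First I would extract the edge count. If $G$ has an angular tree $T$, then by Theorem~\ref{thm:edge-from-angle} it has an edge labeling $(E_r,E_b)$, and the discussion immediately following the definition of edge labelings already records that $|E(G)| = 2|V(G)|-3$ and $|F(G)| = |V(G)|-1$. (Alternatively one can see this directly: $T$ spans $V(A_G)\setminus\{v_1,v_2\}$, which has $|V(G)|+|F(G)|-2$ vertices, so $|T| = |V(G)|+|F(G)|-3$; the face rule for angular structures forces $|T| = 2|E(G)| - 2|F(G)|$ since every face of $A_G$ is a quadrangle with exactly two $T$-edges and $A_G$ has $2|E(G)|$ edges; combining these with Euler's formula for $A_G$ pins down both counts.) So the equality $|E(G)| = 2|V(G)|-3$ comes for free.

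The heart of the converse is the hereditary inequality $|E(W)| \le 2|W|-3$ for every $W \subsetneq V(G)$ with $|W|\ge 2$. I would derive this from part~\ref{enum:tree} of Theorem~\ref{thm:acyclic-and-trees}: $E_r$ is a spanning tree of $G\setminus\{v_2\}$ with all edges directed towards $v_1$, and symmetrically $E_b$ is a spanning tree of $G\setminus\{v_1\}$ directed towards $v_2$. Restricting to $W$: among the edges of $E(W)$, those colored red form a subforest of the tree $E_r$ restricted to $W$, so there are at most $|W|-1$ of them; likewise at most $|W|-1$ blue edges lie in $E(W)$. Thus $|E(W)| \le 2|W|-2$ in general. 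To improve this to $2|W|-3$ I would observe that the red edges in $E(W)$ form a forest on at most $|W|$ vertices in which every non-special vertex has an outgoing red edge, so the number of red edges in $E(W)$ is $|W|$ minus the number of ``red roots'' of $W$ (vertices of $W$ whose outgoing red edge leaves $W$, plus possibly $v_1,v_2$); the same for blue. A set $W$ achieving $2|W|-2$ would need the red-restriction and the blue-restriction to each be a spanning tree of $G(W)$, forcing $W$ to contain no special vertex yet to be ``closed'' under both the red and blue out-edges---but following red out-edges always reaches $v_1$ and blue out-edges always reach $v_2$ (by the towards-$v_1$ / towards-$v_2$ property), so $W$ would have to contain $v_1$ and $v_2$, and then the special edge $e^*$ is uncounted, giving $|E(W)|\le |E(G)|-1 = 2|V(G)|-4 < 2|W|-2$ unless $W=V(G)$. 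This contradiction, handled carefully, yields the strict bound for proper subsets.

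The step I expect to be the main obstacle is precisely this last refinement---turning the easy bound $2|W|-2$ into the required $2|W|-3$ for all proper subsets, rather than only for $W$ containing both special vertices. One must rule out the borderline configuration cleanly; the cleanest route is probably the ``reaching'' argument just sketched (every directed red path terminates at $v_1$, every directed blue path at $v_2$, by acyclicity from part~\ref{enum:acyclic} together with part~\ref{enum:tree}), but one has to be attentive to whether $W$ meets $\{v_1,v_2\}$ in zero, one, or two vertices and count the red/blue roots in each case. Once the sub-additivity inequality is secured, $G$ satisfies the definition of a Laman graph (it is connected, being $2$-connected), and together with Theorem~\ref{thm:tree-if-Laman} for the other direction this completes the characterization. $\hfill\qed$
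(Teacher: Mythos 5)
Your forward direction is fine and is exactly the paper's: it is Theorem~\ref{thm:tree-if-Laman}. The converse, however, has a genuine gap, and it sits precisely where you anticipated trouble. Your plan is to upgrade the easy bound $|E(W)|\le 2|W|-2$ (red edges in $W$ form a subforest of $E_r$, blue ones of $E_b$) to $2|W|-3$ by arguing that equality would force $W$ to be ``closed'' under red and blue out-edges and hence to contain $v_1$ and $v_2$. That inference is false: if the red edges inside $W$ form a spanning tree of $W$, then exactly one vertex of $W$ (a ``red root'') has its outgoing red edge leaving $W$ --- the set is not closed, it merely has one escape per color, and nothing forces $v_1,v_2\in W$. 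More seriously, the whole route cannot be repaired using only the edge labeling and Theorem~\ref{thm:acyclic-and-trees}, because a non-Laman graph can satisfy all of those properties. Concretely, take $K_4$ on $\{a,b,c,d\}$ (with $d$ inside triangle $abc$) placed inside an outer triangle $\{v_1,v_2,v_3\}$ and joined by the two edges $av_1$ and $bv_2$. This graph is planar, $2$-connected, has $11=2\cdot 7-3$ edges, and is not Laman since $|E(\{a,b,c,d\})|=6>2\cdot 4-3$. Yet it admits a valid edge labeling (e.g.\ $E_r=\{a\to v_1,\,v_3\to v_1,\,b\to a,\,c\to a,\,d\to c\}$ and $E_b=\{v_3\to v_2,\,b\to v_2,\,a\to d,\,d\to b,\,c\to b\}$ satisfies the vertex and face rules, and both conclusions of Theorem~\ref{thm:acyclic-and-trees}). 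Here $W=\{a,b,c,d\}$ induces spanning trees in both colors, has red root $a$ and blue root $b$, contains no special vertex, and attains $|E(W)|=2|W|-2$. So any proof of the converse that invokes the angular tree only through Theorem~\ref{thm:edge-from-angle} proves too much.

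What is missing is an essential use of the \emph{tree} property beyond producing the edge labeling. The paper uses it through Lemma~\ref{lem:angular-matching}: an angular tree yields a perfect matching between non-special vertices and inner faces, which makes the constructed L-contact representation \emph{proper} (each inner face receives exactly one right angle, Theorem~\ref{thm:LContactRep}), and Lemma~\ref{lem:L-and-Seg} then converts properness into the hereditary count $|E(W)|\le 2|W|-3$ by a convex/concave-corner argument on the rectilinear face polygons. In the example above the angular structure is forced to contain the $4$-cycle $a,F_1,b,F_2$ (where $F_1,F_2$ are the two faces between the $K_4$ and the outer triangle), so no angular tree and no such matching exist --- this is exactly the obstruction your counting argument never sees. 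If you want a purely combinatorial converse, you would need to bring the matching (or equivalently the one-right-angle-per-face property) into the counting; the edge labeling alone does not suffice.
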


%
%

\subsection{Vertex Types}\label{sec:types}

Assume we have an angular tree $T$ with corresponding edge labeling $(E_r, E_b)$ for a plane Laman graph $G$. Every non-special vertex $v$ in $G$ has two incident edges in $T$. The other endpoint of such an edge corresponds to a face in $G$. These are the two faces that contain the bend of $\LL(v)$. The matching $M$ of $T$ obtained from Lemma~\ref{lem:angular-matching} (using the outer face of $G$ as the triangular face) determines for every vertex of $G$ the incident inner face $f$ containing the right angle of $\LL(v)$. The outgoing red (blue) edge of a vertex $v$ determines the contact made by the horizontal (vertical) leg of $\LL(v)$.

We derive from $M$ and $(E_r, E_b)$ the type of the L-shape $\LL(v)$ for every vertex $v$. The \emph{red sign} and \emph{blue sign} of a vertex $v$, denoted by $t_r(v)$ and $t_b(v)$, represent the direction of the horizontal and vertical leg of $\LL(v)$, respectively. We write the type of $v$ as $t(v) = t_r(v)t_b(v)$, or as its quadrant number (see Fig.~\ref{fig:Lcontact} left).

%
First we set $t_b(v_1) = \oplus$ and $t_r(v_2) = \oplus$ (the red sign of $v_1$ and the blue sign of $v_2$ are irrelevant). For every non-special vertex $v$, let $e_r(v)$ ($e_b(v)$) be its outgoing red (blue) edge, and $e_M(v)$ its incident edge in $M$. The angle between $e_r(v)$ and $e_b(v)$ that contains $e_M(v)$ is called the \emph{matched angle}. The opposite angle is called the \emph{unmatched angle} ($v_1$ and $v_2$ have only an unmatched angle). We set the types according to the following rule.
\smallskip
\begin{description}
 \item[Type rule:] Let $e = (u,v)$ be a directed edge from $u$ to $v$ of color $c$. If $e$ lies in the unmatched angle of $v$, we set $t_c(u) = t_c(v)$, otherwise $t_c(u) \neq t_c(v)$.
\end{description}
\smallskip
%
We need to check if this type rule, along with $T$, $M$, and $(E_r, E_b)$, results in a correct L-contact representation. Around every vertex $v$, the neighboring vertices with incoming edges to $v$ must have the correct red or blue sign. For example, if $t(v) = \text{I}$ and the edge $u \to v$ is blue and lies in the matched angle of $v$, then $t_b(u) = \ominus$. Note that this follows directly from the type rule (see Fig.~\ref{fig:VertexTypes} left).

Secondly, the convex angle of an L-shape $\LL(v)$ must belong to the face that contains $e_M(v)$. For example, if $e_b(v),e_M(v),e_r(v)$ appear in clockwise order around $v$, then $t(v) = \text{I}$ or $t(v) = \text{III}$. We say $v$ is \emph{odd} if $e_b(v),e_M(v),e_r(v)$ appear in clockwise order around $v$, and \emph{even} otherwise.
\begin{figure}[b]
  \centering
  \vspace{-.5\baselineskip}
  \includegraphics{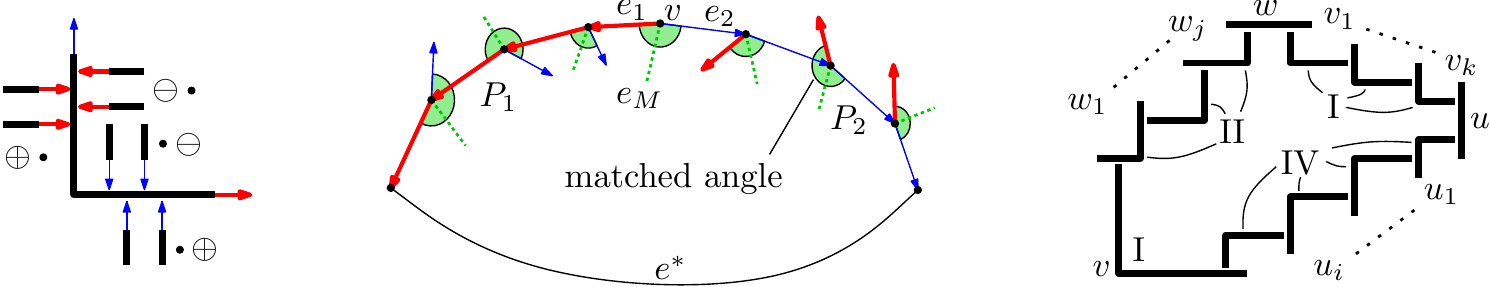}
  \caption{Left/Right: types around a vertex/face ($t(v) = \text{I}$). Middle: proof Lemma~\ref{lem:parity-type}.}
  \label{fig:VertexTypes}
\end{figure}
\begin{lemma}\label{lem:parity-type}
 A non-special vertex $v$ is odd if and only if $t_r(v) = t_b(v)$.
\end{lemma}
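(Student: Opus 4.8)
The plan is to trace the definitions of ``odd'' and of the type rule along the tree $T$, anchoring at the special edge, and show that the parity of a vertex together with the type rule forces $t_r(v)=t_b(v)$ exactly in the odd case. First I would set up the base case: the special vertices $v_1,v_2$ have only an unmatched angle, and the types $t_b(v_1)=\oplus$, $t_r(v_2)=\oplus$ are prescribed; I would check directly that the claim is consistent there (treating $v_1,v_2$ as the trivial anchors of the two spanning trees $E_r$ and $E_b$ from Theorem~\ref{thm:acyclic-and-trees}). For a general non-special vertex $v$, recall from Section~\ref{sec:types} that the edge $e_M(v)\in M$ points from $v$ to one of the two faces incident to the bend of $\LL(v)$, and that the matched angle is the one between $e_r(v)$ and $e_b(v)$ containing $e_M(v)$. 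The key observation is that ``$v$ is odd'' --- meaning $e_b(v),e_M(v),e_r(v)$ occur clockwise around $v$ --- is precisely the condition under which placing the convex (right) angle of $\LL(v)$ into the matched face is compatible with a single fixed combination of the two leg directions, namely $t_r(v)=t_b(v)$ (quadrants $\mathrm{I}$ and $\mathrm{III}$), whereas if $v$ is even the convex angle lands in the matched face exactly when $t_r(v)\neq t_b(v)$ (quadrants $\mathrm{II}$ and $\mathrm{IV}$).

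Concretely, I would argue geometrically about the local picture at $v$: the two legs of $\LL(v)$ emanate from the bend, the horizontal leg along direction $t_r(v)$ and the vertical leg along direction $t_b(v)$, and the right angle of $\LL(v)$ occupies one of the four quadrants at the bend. The outgoing red edge $e_r(v)$ runs along the horizontal leg and the outgoing blue edge $e_b(v)$ along the vertical leg, so the cyclic order of $e_b(v), e_r(v)$ around $v$ (ignoring $e_M(v)$) is determined by $t(v)$: it is ``blue then red clockwise'' exactly for $t(v)\in\{\mathrm{I},\mathrm{III}\}$ and ``red then blue clockwise'' for $t(v)\in\{\mathrm{II},\mathrm{IV}\}$. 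Since $e_M(v)$ lies in the matched angle and the matched face is, by the matching $M$ from Lemma~\ref{lem:angular-matching}, exactly the inner face receiving the right angle of $\LL(v)$, the clockwise order $e_b(v),e_M(v),e_r(v)$ holds iff $e_M(v)$ sits in the quadrant between the blue leg and the red leg taken in clockwise order, which is the right-angle quadrant iff $t(v)\in\{\mathrm{I},\mathrm{III}\}$, i.e.\ iff $t_r(v)=t_b(v)$. This is the content of Fig.~\ref{fig:VertexTypes}(middle), and most of the work is a careful case check over the four quadrant types that the ``matched face = right-angle face'' assignment is self-consistent with the type rule.

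The main obstacle I anticipate is not the geometric case analysis per se but verifying that the type rule actually produces \emph{consistent} red and blue signs at every vertex before one can even speak of $t(v)$ as well-defined --- i.e.\ that following the type rule outward along the red tree $E_r$ (directed toward $v_1$) and, independently, along the blue tree $E_b$ (directed toward $v_2$) never forces contradictory values of $t_r$ or $t_b$ at a vertex. Here I would lean on Theorem~\ref{thm:acyclic-and-trees}\ref{enum:tree}: since $E_r$ is a tree with all edges directed toward $v_1$ and each non-special vertex has a unique outgoing red edge, $t_r$ is determined by propagating from $v_2$'s prescribed value $t_r(v_2)=\oplus$ along unique paths, with no cycles to close, and symmetrically for $t_b$; so well-definedness is automatic. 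Once that is in place, the parity statement reduces to the local picture at a single vertex $v$, which is settled by the quadrant case check above. I would also double-check the degenerate situation where $v$ is adjacent to a special vertex or where one of $e_r(v),e_b(v)$ is the special edge $e^{*}$, but these reduce to the base case already handled.
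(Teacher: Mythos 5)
There is a genuine gap: your argument is circular. At the point where Lemma~\ref{lem:parity-type} is invoked, no L-contact representation exists yet --- the L-shapes $\LL(v)$, the placement of their right angles in the matched faces, and the correspondence between outgoing red/blue edges and leg contacts are all established only later (Theorem~\ref{thm:LContactRep}, Claims~1--3), and this lemma is itself a prerequisite for that construction. The signs $t_r(v)$ and $t_b(v)$ are defined purely combinatorially by the type rule: $t_r(v)$ is obtained by propagating a sign along the directed red path $P_1$ from $v$ to $v_1$ in $E_r$, flipping exactly at those edges that lie in the matched angle of their head, and $t_b(v)$ by the analogous, entirely independent propagation along the blue path $P_2$ to $v_2$ in $E_b$. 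Each sign therefore depends on the whole path back to a special vertex, so whether $t_r(v)=t_b(v)$ is a global quantity; it cannot be ``settled by the quadrant case check'' at $v$ alone. Your local picture shows only what the relationship between parity and type \emph{would have to be} if a consistent representation existed --- which is precisely the statement to be proved, not a proof of it.

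The paper supplies the missing global ingredient with a counting argument: it forms the cycle $C$ from $P_1$, $P_2$ and the special edge $e^*$ (these paths meet only in $v$ by the acyclicity of $E_r \cup E_b^{-1}$, Theorem~\ref{thm:acyclic-and-trees}), takes the subgraph $G'$ enclosed by $C$, and compares via Euler's formula the number of inner faces of $G'$ with the number of edges of the matching $M$ contained in $G'$. Classifying the edges of $P_1$ and $P_2$ according to whether they lie in the matched or unmatched angle of their head and whether the head's matching edge points inside or outside $G'$, this comparison forces the numbers of sign flips along $P_1$ and along $P_2$ to have equal parity when $v$ is odd and opposite parity when $v$ is even, which is exactly the claim. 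Some global bookkeeping of this kind (or an induction maintaining an equivalent invariant) is unavoidable, and your proposal contains none. The portion of your write-up that is correct --- well-definedness of $t_r,t_b$ from the tree structure of $E_r,E_b$, and the consistency check at the special vertices --- is the easy half and is not where the difficulty lies.
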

\begin{proof}
 Consider the directed red path $P_1$ from $v$ to $v_1$ and the directed blue path $P_2$ from $v$ to $v_2$ (see Fig.~\ref{fig:VertexTypes} middle). Since $E_r \cup E_b^{-1}$ is acyclic by Theorem~\ref{thm:acyclic-and-trees}, $P_1 \cap P_2$ consists only of $v$. Let $C$ be the cycle formed by $P_1,P_2$ and the special edge $e^{*}$, and $G'$ be the maximal subgraph of $G$ whose outer cycle is $C$. We define $r_1,r_2,r_3,r_4$ as follows (we define $b_1,b_2,b_3,b_4$ analogously w.r.t. $P_2$):
 \begin{itemize}[label=]
  \item $r_1 := \#\{ e = (u,v) \in P_1 \,|\, e \text{ in unmatched angle of } v \text{ and } e_M(v) \text{ outside } G'\}$
  \item $r_2 := \#\{ e = (u,v) \in P_1 \,|\, e \text{ in unmatched angle of } v \text{ and } e_M(v) \text{ inside } G'\}$
  \item $r_3 := \#\{ e = (u,v) \in P_1 \,|\, e \text{ in matched angle of } v \text{ and } e_M(v) \text{ outside } G'\}$
  \item $r_4 := \#\{ e = (u,v) \in P_1 \,|\, e \text{ in matched angle of } v \text{ and } e_M(v) \text{ inside } G'\}$
 \end{itemize}
%
 Now let $k = |C|$ be the number of vertices on $C$ and $|V(G')| = k + n'$. Then $G'$ has $2n' + k + r_2 + r_3 + b_2 + b_3$ edges and thus by Euler's formula $n' + b_2 + b_3 + r_2 + r_3 + 1$ inner faces. On the other hand $G'\setminus \{v\}$ contains exactly $n' + b_2 + b_4 + r_2 + r_4$ matching edges. So if $v$ is odd,
 then $e_M$ lies inside $G'$, too. Since the number of inner faces and matching edges must coincide we have $b_3 + r_3 = b_4 + r_4$. In particular $b_3 + b_4$ and $r_3 + r_4$ have the same parity, which means that the red and blue sign of $v$ coincide. If $v$ is even,
 then $e_M$ lies outside $G'$ and we get $b_3 + r_3 + 1 = b_4 + r_4$, which implies that $b_3 + b_4$ and $r_3 + r_4$ have different parity. Hence the red sign and blue sign at $v$ are distinct.
\end{proof}
Finally we consider the faces in the L-contact representation. Every inner face $f$ of $G$ has three special vertices: the two sinks $u$ and $w$, as well as the vertex $v$ that $f$ is matched to in $M$. Let $u,u_1,\ldots,u_i,v,w_1,\ldots,w_j,w$,\linebreak $v_1,\ldots,v_k$ be the clockwise order of the vertices around $f$. The type rule implies the following shape of faces in the L-contact representation (see Fig.~\ref{fig:VertexTypes} right).
\begin{lemma}\label{lem:types-around-face}
 Let $v$ be the vertex that is matched to a face $f$, and $t$ be the type of $v$. Then we have the following:
 \begin{itemize}
  \item Each of $u_1,\ldots,u_i$ has type $t - 1$.
  \item Each of $v_1,\ldots,v_k$ has type $t$.
  \item Each of $w_1,\ldots,w_j$ has type $t + 1$.
 \end{itemize}
\end{lemma}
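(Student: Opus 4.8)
The plan is to prove the claim by walking around the facial cycle of $f$ and applying the type rule to its edges one at a time. Recall that the type rule propagates the red sign along directed red edges and the blue sign along directed blue edges, from head to tail, reversing the sign precisely across those edges that lie in the \emph{matched} angle of their head. The first task is therefore to determine the combinatorial shape of the facial walk of $f$: the colour and orientation of every facial edge, and whether each edge occupies the matched or the unmatched angle at its head. The starting observation is that $f$ is the face matched to $v$ by $M$, so the edge $e_M(v)$ of $T$ points into $f$; hence the corner of $f$ at $v$ --- and so every facial edge of $f$ incident to $v$ --- lies in the matched angle of $v$. Feeding this into the face rule of the edge labeling (red facial edges are directed into the red sink, blue facial edges into the blue sink) pins down the coloured, oriented facial walk of $f$ shown in Fig.~\ref{fig:VertexTypes}(right): $u$ and $w$ are the two sinks, $v,u_1,\dots,u_i,w_1,\dots,w_j,v_1,\dots,v_k$ occupy the positions named in the statement, the arc of the walk from $v$ through $u_1,\dots,u_i$ to one sink and the arc from $v$ through $w_1,\dots,w_j$ to the other sink are monochromatic of the two different colours, and the remaining arc, through $v_1,\dots,v_k$, is a single monochromatic directed path between the two sinks. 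The two possibilities here correspond to whether $v$ is odd or even and are symmetric.

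Granting this structure, I would read off the types by propagation. Along the arc from $v$ through $u_1,\dots,u_i$, I claim the edge incident to $v$ lies in the matched angle of its head and every later edge lies in the unmatched angle of its head, so that the sign carried by this arc is reversed exactly once, next to $v$: all of $u_1,\dots,u_i$ then share the sign of this arc's colour, and this common value is opposite to the corresponding sign of $v$. By Lemma~\ref{lem:parity-type} each $u_m$ has parity opposite to $v$, which together with the propagated sign fixes its type; a short check over the four type classes of $v$ then gives $t(u_m)=t-1$, and symmetrically $t(w_m)=t+1$. For the arc through $v_1,\dots,v_k$, the analogous propagation has a single reversal, this time at the sink end, which makes the sign it carries equal to the value $v$ has; since moreover each $v_m$ has the same parity as $v$, Lemma~\ref{lem:parity-type} forces its other sign to agree with $v$ as well, so $t(v_m)=t$.

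The step I expect to cost the most work is the matched-versus-unmatched bookkeeping taken for granted above --- in particular the claim that on each of these arcs precisely one edge lies in a matched angle and all the others in unmatched angles, and the determination of the parities of the facial vertices of $f$. For a facial edge $e=(p,q)$ of $f$ with $q\neq v$, the face $f$ is not the one matched to $q$, so the matched angle of $q$ is the corner of some other face and whether $e$ falls in it is not visible from the facial walk of $f$ alone; it is governed by the matching $M$, equivalently by the angular tree $T$ --- the matched angle of $q$ is the corner of the face that is the $T$-child of $q$ under the rooting used in the proof of Lemma~\ref{lem:angular-matching} --- together with the vertex rule of the edge labeling, which prescribes the cyclic arrangement of the edges incident to $q$. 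I would settle these points by a careful local analysis at each facial vertex of $f$, using Fig.~\ref{fig:VertexTypes}(right) as a guide and handling the case $v$ even (and, for the arc through $v_1,\dots,v_k$, its two possible colourings) in parallel by symmetry.
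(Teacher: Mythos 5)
Your overall strategy---propagating the type rule around the facial walk of $f$, using matched/unmatched angles and the parity Lemma~\ref{lem:parity-type}---is the same as the paper's, but it rests on a structural claim that is false. The three arcs of the facial walk are \emph{not} monochromatic: the face rule only says that every red edge of $f$ points toward the red sink and every blue edge toward the blue sink, so on the arc $v,w_1,\ldots,w_j,w$ (say) blue edges directed forward toward $w$ and red edges directed backward toward $u$ may interleave arbitrarily. Consequently there is no single ``sign carried by the arc''; each edge propagates only the sign of its own colour, and that colour changes along the arc. The correct repair---and what the paper actually does---is to observe that for every non-distinguished vertex $q$ of $f$ one has $(q,f)\in T\setminus M$, hence $f$ lies in the \emph{unmatched} angle of $q$; therefore every facial edge of $f$ that is incoming at $q$ lies in $q$'s unmatched angle, so consecutive vertices $w_l,w_{l+1}$ agree in \emph{one} sign (red or blue, depending on the edge's colour), and since both are even (resp.\ both odd on the $v_1,\ldots,v_k$ arc) Lemma~\ref{lem:parity-type} transfers the agreement to the other sign and hence to the type. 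Your argument as written cannot reach this conclusion because it assumes a single sign is threaded through the whole arc.

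Beyond that, the parts you explicitly defer are not routine bookkeeping but the actual content of the proof. The parity of the facial vertices does not come from Lemma~\ref{lem:parity-type} (which only relates parity to sign agreement); it comes from the $T\setminus M$ observation above combined with the face rule. And your claim of ``a single reversal, this time at the sink end'' for the $v_1,\ldots,v_k$ arc is not accurate: connecting $t(v_1)$ to $t(v)$ requires a genuine case analysis at the blue sink $w$ (the paper distinguishes roughly five sub-cases depending on the colours/orientations of the two facial edges at $w$ and on whether the incoming one lies in $w$'s matched or unmatched angle), and in some of these cases the relevant sign passes through $w$ with no reversal at all. As it stands the proposal establishes only that the $v_l$ all share a type in $\{t,\,t+2\}$, and the step that excludes $t+2$ is missing.
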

\begin{proof}
 Let us assume that $v$ is odd, i.e., $e_r,e_M,e_b$ appear around $v$ in this counterclockwise order. (The case that $v$ is even is analogous.) Then the face rule and vertex rule imply that $v$, the blue sink of $f$, and the red sink of $f$ appear around $f$ in this clockwise order, i.e., $u$ and $w$ are the red and blue sink of $f$, respectively. Every vertex incident to $f$, except for $u,v,w$, has an edge with $f$ in the angular tree $T$ but not in the matching $M$. This means that $f$ lies in the unmatched angle of each such vertex. Together with the face rule of the edge labeling, this implies that $u_1,\ldots,u_i$ and $w_1,\ldots,w_j$ are even, while $v_1,\ldots,v_k$ are odd.

 Consider the edge $e$ between $v$ and $w_1$. If $e$ is blue, then it is directed from $v$ to $w_1$ and lies in the unmatched angle of $w_1$. Hence $t_b(w_1)=t_b(v)$. Since $w_1$ is even, its type is indeed $t+1$. If $e$ is red, then it is directed from $w_1$ to $v$ and lies in the matched angle of $v$. Hence $t_r(w_1) \neq t_r(v)$. Again, since $w_1$ is even, its type is $t+1$. Every edge between $w_l$ and $w_{l+1}$ ($l < i$) lies in the unmatched angle of its endpoint and hence both vertices have the same red or blue sign. Since both are even, they have in fact the same type. Similarly, one can show that each of $u_1,\ldots,u_i$ has type $t - 1$.

 Now consider $w = v_0,v_1,\ldots,v_k,v_{k+1}=u$. Again all edges among those vertices that end at $v_l$ ($1 \leq l \leq k$) lie in the unmatched angle of their endpoint. Hence since $v_1,\ldots,v_k$ are all odd, they have the same type (either $t$ or $t+2$). For $w$, the blue sink of $f$, we have three cases. If both edges at $w$ are incoming (and hence blue), then both lie in the same angle (either matched or unmatched) of $w$ and thus $t_b(v_1)=t_b(w_j)$, which implies $t(v_1)=t(v)$. If $w \to v_1$ is red and $w_j \to w$ is incoming blue and lies in the matched angle of $w$, then $w$ is even. It follows that $t_b(w_j) \neq t_b(w)$ and $t_r(w_j) \neq t_r(w) = t_r(v_1)$. This again implies $t(v_1)=t(v)$. If $w_j \to w$ lies in the unmatched angle of $w$, then $w$ is odd and we have $t_b(w_j) = t_b(w) = t_r(w) = t_r(v_1) = t_b(v_1)$ as desired. Similarly, if $w \to w_j$ is red and $v_1 \to w$ lies in the matched angle of $w$, then $w$ is odd and $t_r(w) = t_r(w_j)$, $t_b(w) \neq t_b(w_j)$, and $t_b(w) \neq t_b(v_1)$. Thus $t_b(w_j) = t_b(v_1)$, which implies $t(v_1) = t(v)$. Finally if $v_1 \to w$ lies in the unmatched angle of $w$, then $w$ is even and we have $t(w_j) = t(w)$, and $t_b(w_j) = t_b(w) = t_b(v_1)$ as desired.
\end{proof}
%
%
%

\subsection{Inequalities}\label{sec:inequalities}

Given the type of every vertex $v$, it suffices to find the point $(x(v),y(v)) \in \mathbb{R}^2$ where the bend of $\LL(v)$ is located. Additionally we define for each inner face $f$ an auxiliary point $(x(f),y(f)) \in \mathbb{R}^2$, which in the L-contact representation of $G$ will correspond to some point in the bounded region corresponding to $f$.

We use two directed (multi-)graphs $D_r$ and $D_b$ on the vertices and inner faces of $G$ to describe inequalities for the $x$- and $y$-coordinates, respectively. For every inequality $x(u) < x(v)$ ($y(u) < y(v)$) there is an edge $u \to v$ in $D_r$ ($D_b$), where $u,v \in V(G)\cup F(G)$. Both graphs $D_r$ and $D_b$ contain all edges of $G$. The direction of an edge $(u, v)$ can be determined by $t(u)$, $t(v)$, and $(E_r, E_b)$. An edge $u \to v$ is in $D_r$ iff (i) $u \to v \in E_r$ and $t_r(u) = \oplus$, (ii) $v \to u \in E_r$ and $t_r(v) = \ominus$, (iii) $u \to v \in E_b$ and $t_r(v) = \ominus$, or (iv) $v \to u \in E_b$ and $t_r(u) = \oplus$. Similarly, $u \to v$ is in $D_b$ iff (i) $u \to v \in E_b$ and $t_b(u) = \oplus$, (ii) $v \to u \in E_b$ and $t_b(v) = \ominus$, (iii) $u \to v \in E_r$ and $t_b(v) = \ominus$, or (iv) $v \to u \in E_r$ and $t_b(u) = \oplus$.

The special edge $e^* = (v_1,v_2)$ is directed $v_2 \to v_1$ in $D_r$ and $v_1 \to v_2$ in $D_b$. Note that this is consistent with the above rules using $t_b(v_1) = \oplus = t_r(v_2)$ and putting either $v_1 \to v_2$ into $E_b$ or $v_2 \to v_1$ into $E_r$.

We can derive from the face rule of the edge labeling $(E_r,E_b)$, the types around a face (Lemma~\ref{lem:types-around-face}), and the definition of directed edges above, how the edges of a face $f$ of the Laman graph $G$ are oriented in $D_r$ and $D_b$. We remark that some edges of $D_r, D_b$, namely those between faces and vertices of $G$, are yet to be defined, and that a facial cycle in $G$ will correspond to a non-facial cycle in $D_r$, as well as $D_b$.

Let $f$ be an inner face in $G$ with its three distinguished vertices $u,v,w$. For convenience we put $u_{i+1} = v = w_0$, $w_{j+1} = w = v_0$, and $v_{k+1} = u = u_0$. Then all but two edges of $f$ appear in $D_r$ and $D_b$ according to the following table:

\begin{table}[h]
 \centering
 \newcommand{\la}{\ensuremath{\leftarrow}}
 \begin{tabular}{|c|c|c|c|c|}
  \hline
  & $t(v) = \I$ & $t(v) = \II$ & $t(v) = \III$ & $t(v) = \IV$ \\
  \hline
  \multirow{3}{*}{$D_r$} & $u_0 \la \ldots \la u_{i+1}$ & $u_1 \to \ldots \to u_{i+1}$ & $u_0 \to \ldots \to u_{i+1}$ & $u_1 \la \ldots \la u_{i+1}$ \\
  & $v_1 \to \ldots \to v_{k+1}$ & $v_0 \to \ldots \to v_k$ & $v_1 \la \ldots \la v_{k+1}$ & $v_0 \la \ldots \la v_k$ \\
  & $w_0 \to \ldots \to w_j$ & $w_0 \la \ldots \la w_{j+1}$ & $w_0 \la \ldots \la w_j$ & $w_0 \to \ldots \to w_{j+1}$ \\
  \hline
  \multirow{3}{*}{$D_b$} & $u_1 \la \ldots \la u_{i+1}$ & $u_0 \la \ldots \la u_{i+1}$ & $u_1 \to \ldots \to u_{i+1}$ & $u_0 \to \ldots \to u_{i+1}$ \\
  & $v_0 \la \ldots \la v_k$ & $v_1 \to \ldots \to v_{k+1}$ & $v_0 \to \ldots \to v_k$ & $v_1 \la \ldots \la v_{k+1}$ \\
  & $w_0 \to \ldots \to w_{j+1}$ & $w_0 \to \ldots \to w_j$ & $w_0 \la \ldots \la w_{j+1}$ & $w_0 \la \ldots \la w_j$ \\
  \hline
 \end{tabular}

 \medskip

 \caption{The edges of a face $f$ of $G$ (except for two) form in $D_r$ and $D_b$ three directed paths.}
 \vspace{-2\baselineskip}
 \label{tab:inequalities-around-face}
\end{table}

\noindent
The situation around a face $f$ is illustrated in Figure~\ref{fig:ineq-around-face2}. The two edges $e,e'$ in $f$ that are not listed for $D_r$ ($D_b$) in Table~\ref{tab:inequalities-around-face} are incident to the blue (red) sink of $f$. These are the dashed edges in the figure.

\begin{figure}[htb]
 \centering
 \includegraphics[width=.9\textwidth]{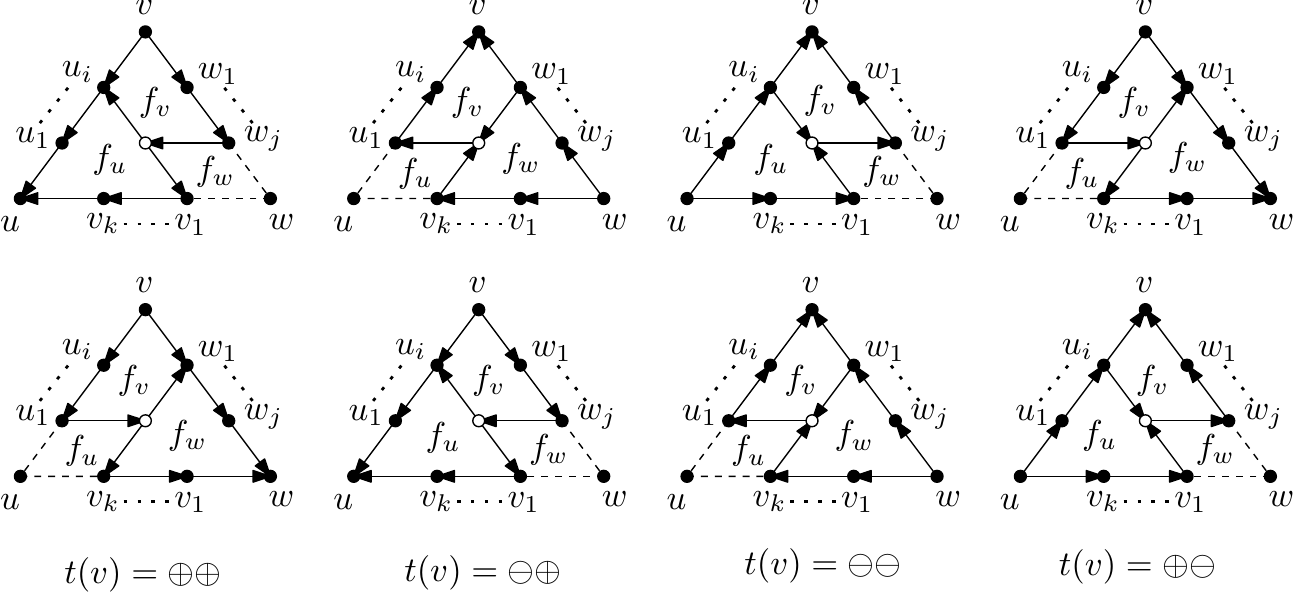}
 \caption{The directed graphs $D_r$ (top row) and $D_b$ (bottom row) locally around an inner face $f$ of $G$. The directions of dashed edges depend on the types of the endpoints.}
 \label{fig:ineq-around-face2}
\end{figure}

\begin{lemma}\label{lem:source-or-sink}
  The two edges incident to the blue (red) sink $s$ of $f$ are directed in $D_r$ ($D_b$) both incoming at $s$ if $t_r(s) = \ominus$ ($t_b(s) = \ominus$) and both outgoing at $s$ if $t_r(s) = \oplus$ ($t_b(s) = \oplus$).
\end{lemma}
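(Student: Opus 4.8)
The plan is to argue locally around the blue sink $s$ of the face $f$, using the table of edge directions established for $D_r$ in Table~\ref{tab:inequalities-around-face} together with the rule defining which edges of $G$ enter $D_r$. Recall that $s$ is one of the two sinks of $f$; in the notation of Lemma~\ref{lem:types-around-face} and Table~\ref{tab:inequalities-around-face}, the blue sink corresponds to one of the endpoints where the three directed paths in $D_r$ meet, namely to either $u_0=u$ or $w_{j+1}=w$ (whichever plays the role of blue sink), and the two edges of $f$ incident to $s$ are precisely the two edges not listed in the $D_r$ block of the table — the dashed edges in Figure~\ref{fig:ineq-around-face2}. So the first step is to identify these two incident edges of $f$ at $s$ and recall that, by the face rule of the edge labeling, both of them are \emph{blue} (they point into the blue sink from the red sink along the two arcs of $\partial f$).

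Next I would apply the definition of $D_r$-edges to each of these two blue edges. An edge $a\to b\in E_b$ lies in $D_r$ oriented $a\to b$ iff $t_r(b)=\ominus$, and oriented $b\to a$ iff $t_r(a)=\oplus$. Since both edges at $s$ are blue and directed \emph{into} $s$ within $E_b$ (as $s$ is the blue sink of $f$), we write each as $s'\to s$ with $s'$ the other endpoint on $\partial f$. Then $s'\to s\in E_b$ contributes to $D_r$ the edge $s'\to s$ exactly when $t_r(s)=\ominus$ — note this condition depends only on $s$, not on $s'$ — and contributes $s\to s'$ exactly when $t_r(s')=\oplus$. The key observation is that the $D_r$-membership of each of the two incident blue edges, \emph{as far as the half incident to $s$ is concerned}, is governed solely by the sign $t_r(s)$: if $t_r(s)=\ominus$ both edges are directed into $s$, and if $t_r(s)=\oplus$ we must check that the $t_r(s')=\oplus$ clause fires for both neighbors $s'$. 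For the latter, I would invoke Lemma~\ref{lem:types-around-face}: the two neighbors of the blue sink along $\partial f$ (the last vertex $w_j$ on one arc and the first vertex on the other, or $u_i$ etc., depending on orientation) have types $t\pm 1$ relative to $v$'s type, and one checks from those prescribed types that their red sign is the value forcing the edge to point out of $s$ — equivalently, one uses the complementary clauses (ii) and (iv) in the $D_r$ definition for the $E_r$ and $E_b$ edges meeting $s$. A cleaner route is simply to note that the two table-listed paths of $D_r$ at the endpoint playing the role of $s$ already have a determined direction (they are the $u$-path and $w$-path, or $v$-path and $u/w$-path, whose orientation Table~\ref{tab:inequalities-around-face} fixes by the type of $v$), and then observe that $t_r(s)=\ominus$ is exactly the case in which those two listed paths \emph{arrive} at $s$, forcing, by planarity and the alternation of in/out edges around $s$ dictated by the vertex rule of the edge labeling, that the two remaining (dashed) edges also point into $s$; symmetrically for $t_r(s)=\oplus$.

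The cleanest organization, and the one I would actually write, is: case on the four types of $v$ using Table~\ref{tab:inequalities-around-face}, in each case locate the blue sink $s$ among $\{u,w\}$ (determined by whether $v$ is odd or even, via Lemma~\ref{lem:types-around-face}), read off from the table the directions of the two non-dashed $D_r$-edges at $s$, and then note that the sign $t_r(s)$ is exactly what flips between the two sub-cases — verifying in each that both dashed edges at $s$ point the same way as dictated. The statement for $D_b$ and the red sink is entirely symmetric (swap red$\leftrightarrow$blue, $D_r\leftrightarrow D_b$, $v_1\leftrightarrow v_2$), so I would dispatch it with "by symmetry."

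\textbf{Main obstacle.} The routine part is the four-way case analysis from the table; the genuinely delicate point is showing that the \emph{two dashed edges at $s$ behave consistently with each other} — i.e., that one cannot have one dashed edge incoming and the other outgoing at $s$. This is where I expect to spend the most care: it should follow from the vertex rule of the edge labeling (which controls the cyclic pattern of incoming/outgoing red and blue edges around $s$) combined with the fact that $s$, being the blue sink of $f$, sees both of these edges as blue-incoming in $(E_r,E_b)$, so the clause that determines their $D_r$-direction is the \emph{same} clause ("(iii) $u\to v\in E_b$ and $t_r(v)=\ominus$") for both — hence they share the fate dictated by $t_r(s)$. Making that last sentence airtight, rather than hand-wavy, is the crux of the proof.
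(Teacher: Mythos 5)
Your overall instinct --- that the $D_r$-direction of each of the two edges at the blue sink $s$ is governed by the single sign $t_r(s)$, read off directly from the defining clauses of $D_r$ --- is the right one, and it is how the paper argues. But your execution has two concrete problems. First, the claim that ``by the face rule both of the edges of $f$ at the blue sink are blue'' is false: the face rule only forces every \emph{blue} edge of $f$ to point into $s$; one of the two edges of $f$ at $s$ may be the unique \emph{outgoing red} edge of $s$, directed from $s$ (the blue sink) toward the red sink, which the face rule explicitly permits. The paper's proof splits into exactly these two cases (both edges incoming blue, or one incoming blue and one outgoing red; both outgoing is excluded because $s$ has only one outgoing red edge) and observes that in the second case the red edge's $D_r$-direction is determined by clauses (i)/(ii), i.e.\ by the red sign of its \emph{tail} --- which is again $s$. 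Your ``crux'' paragraph, which rests on both edges invoking clause (iii), therefore does not cover half the cases; the conclusion survives, but not by the argument you give.

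Second, you misread clause (iv): for a blue edge $a \to b \in E_b$, the reversed edge $b \to a$ lies in $D_r$ iff $t_r(b) = \oplus$ (the $u$ in ``$v \to u \in E_b$ and $t_r(u) = \oplus$'' is the head of the blue edge), not iff $t_r(a) = \oplus$. With the correct reading, clauses (iii) and (iv) are complementary and both depend only on the red sign of the head $s$, so there is nothing left to verify when $t_r(s) = \oplus$; your detour through Lemma~\ref{lem:types-around-face}, Table~\ref{tab:inequalities-around-face}, and ``planarity and the alternation of in/out edges around $s$'' is both unnecessary and unsound as stated (the vertex rule constrains $(E_r,E_b)$, not $D_r$, and the table deliberately omits the two dashed edges you are trying to orient). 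The repair is short: each of the two edges at $s$ is either blue with head $s$ or red with tail $s$; in either case the relevant clause of the definition of $D_r$ reads off $t_r(s)$, giving both edges incoming at $s$ when $t_r(s) = \ominus$ and both outgoing when $t_r(s) = \oplus$.
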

\begin{proof}
 If both edges $e$ and $e'$ are incoming at the blue (red) sink $s$ in the edge labeling, then they are colored blue (red). By definition $e$ and $e'$ are incoming at $s$ in $D_r$ ($D_b$) if and only if the red (blue) sign of $s$ is $\ominus$.

 If in the edge labeling $e$ is incoming at $s$ while $e'$ is outgoing, then $e$ is blue (red) and $e'$ is red (blue). (Note that by the face rule $e$ and $e'$ can not be both outgoing at a sink of that face.) Again $e$ and $e'$ are incoming at $s$ in $D_r$ ($D_b$) iff the red (blue) sign of the end-vertex of $e$, which is $s$, and the start-vertex of $e'$, which is $s$ as well, is $\ominus$.
\end{proof}
%
%

We need to ensure that the L-contact representation is non-crossing. The inequalities above are not sufficient to achieve this. Therefore we add additional inequalities for each inner face. These inequalities ensure that each inner face does not cross itself in the L-contact representation. The inequalities for each type of face are shown in Table~\ref{tab:face-edges}. The corresponding edges in $D_r$ and $D_b$, i.e., those that join a vertex with an inner face of $G$, are defined as follows. Consider an inner face $f$ with its three distinguished vertices $u,v,w$. If $t(v) = \I$ then we have in $D_r$ the edges $w_j \to f, f \to v_1$ and $f \to u_i$, and in $D_b$ the edges $u_1 \to f, f \to v_k$ and $f \to w_1$. If $t(v) = \II$ then we have in $D_r$ the edges $v_k \to f, f \to u_1$ and $w_1 \to f$, and in $D_b$ the edges $w_j \to f, f \to v_1$ and $f \to u_i$. If $t(v) = \III$ then we have in $D_r$ the edges $u_i \to f, v_1 \to f$ and $f \to w_j$, and in $D_b$ the edges $w_1 \to f, v_k \to f$ and $f \to u_1$. If $t(v) = \IV$ then we have in $D_r$ the edges $u_1 \to f, f \to v_k$ and $f \to w_1$, and in $D_b$ the edges $u_i \to f, v_1 \to f$ and $f \to w_j$. We again refer to Figure~\ref{fig:ineq-around-face2} for an illustration.

In case any of $u_1,u_i,v_1,v_k,w_1,w_j$ does not exist, we replace it in the above definition as follows: Replace $v_1/u_i$ by $u$, $u_1/w_j$ by $v$, and $v_k/w_1$ by $w$. This may introduce parallel edges, e.g., when $t(v) = \I$ and neither $v_1$ nor $u_i$ exists.
\begin{table}[h]
 \centering
 \newcommand{\la}{\ensuremath{\leftarrow}}
 \vspace{-1\baselineskip}
 \begin{tabular}{|c|c|c|c|c|}
  \hline
  & $t(v) = \I$ & $t(v) = \II$ & $t(v) = \III$ & $t(v) = \IV$ \\
  \hline
  $D_r$ & $w_j \to f$;$f \to v_1, u_i$ & $v_k,w_1 \to f$;$f \to u_1$ & $v_1,u_i \to f$;$f \to w_j$ & $u_1 \to f$;$f \to w_1, v_k$ \\
  \hline
  $D_b$ & $u_1 \to f$;$f \to w_1, v_k$ & $w_j \to f$;$f \to v_1, u_i$ & $v_k,w_1 \to f$;$f \to u_1$ & $v_1,u_i \to f$;$f \to w_j$ \\
  \hline
 \end{tabular}

 \medskip

 \caption{The three inequality edges of a face $f$ of $G$ in $D_r$ and $D_b$ for each type of $f$.}
 \vspace{-2\baselineskip}
 \label{tab:face-edges}
\end{table}

\begin{lemma}\label{lem:ineq_acyclic}
 The graphs $D_r$ and $D_b$ are acyclic.
\end{lemma}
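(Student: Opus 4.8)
The plan is to show that $D_r$ is acyclic (the argument for $D_b$ is symmetric, swapping the roles of red/blue, $v_1/v_2$, and horizontal/vertical). First I would record the ``local'' picture: by Table~\ref{tab:inequalities-around-face}, Table~\ref{tab:face-edges}, Lemma~\ref{lem:types-around-face}, and Lemma~\ref{lem:source-or-sink}, around each inner face $f$ of $G$ the edges of $D_r$ incident to $f$ and to the vertices of $f$ form a small structured gadget: three directed paths along the boundary arcs $u_0\ldots u_{i+1}$, $v_1\ldots v_{k+1}$, $w_0\ldots w_j$ together with the three inequality edges to/from the face-vertex $f$, and the two ``dashed'' edges at the blue sink $s$ of $f$, which by Lemma~\ref{lem:source-or-sink} are either both into $s$ or both out of $s$. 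The key qualitative fact I want to extract is that \emph{$f$ together with its incident $D_r$-edges has no directed cycle confined to the closed region of $f$}, and more importantly that the boundary of $f$, traversed in $D_r$, behaves like an ``$x$-monotone'' cycle: it has exactly one local source and one local sink on $\partial f$ (at the two sinks of $f$, or at $f$ itself and one sink). This is exactly the planar ``one source, one sink per face'' situation that appeared in the proof of Theorem~\ref{thm:acyclic-and-trees}.

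Next I would make this global via the same topological argument used for Theorem~\ref{thm:acyclic-and-trees}\ref{enum:acyclic}. Suppose for contradiction that $D_r$ has a directed cycle; since $D_r$ is drawn in the plane (vertices of $G$ at their embedded positions, each face-vertex $f$ placed inside $f$, and all $D_r$-edges drawn without crossings inside the faces they belong to), we may take a \emph{nesting-minimal} directed cycle $K$ — one enclosing an inclusion-minimal set of bounded faces of the planar drawing of $D_r$. A standard fact (used already in the paper) is that such a minimal directed cycle in a plane digraph is either a facial cycle of the drawing, or encloses in its interior a vertex that is a source or a vertex that is a sink of $D_r$. So it suffices to establish two things: (a) no bounded face of the drawing of $D_r$ is a directed cycle, and (b) $D_r$ has no internal source and no internal sink, i.e.\ every vertex of $D_r$ other than the designated global source/sink is neither a source nor a sink.

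For (b): every non-special vertex $v$ of $G$ has an outgoing red edge $e_r(v)$ and an outgoing blue edge $e_b(v)$ in $(E_r,E_b)$; depending on $t_r(v)$ these contribute, via rules (i)–(iv) defining $D_r$, at least one $D_r$-edge out of $v$ and at least one into $v$ (concretely: the red out-edge of $v$ is oriented out of $v$ in $D_r$ iff $t_r(v)=\oplus$ and into $v$ iff $t_r(v)=\ominus$, and the matched-angle incoming red/blue edges supply the opposite orientation) — so $v$ is neither a source nor a sink. Each face-vertex $f$ by Table~\ref{tab:face-edges} has both in- and out-edges in $D_r$ by construction. The special vertices $v_1,v_2$ (with the convention $v_2\to v_1$ for $e^*$ in $D_r$) are the unique global source and sink, mirroring the outer-face analysis of Theorem~\ref{thm:acyclic-and-trees}. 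For (a): a bounded face of the drawing of $D_r$ is bounded either by part of a single $G$-face gadget or is one of the ``sub-cells'' that a $G$-face $f$ is cut into by the three inequality edges at $f$; in each case the local orientation rules (Lemma~\ref{lem:source-or-sink} and Table~\ref{tab:inequalities-around-face}) put a source and a sink on its boundary, so it is not a directed cycle. Combining (a) and (b) with the minimality of $K$ yields the contradiction, so $D_r$ is acyclic; the same argument with colors and special vertices interchanged gives that $D_b$ is acyclic.

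I expect the main obstacle to be part~(a)/the careful bookkeeping of the face gadgets: one must verify, for each of the four types $\I,\II,\III,\IV$ of the matched vertex $v$ and each of the sink-orientation cases of Lemma~\ref{lem:source-or-sink}, that the internal edges $w_j\to f$, $f\to v_1$, $f\to u_i$ (and the type-$\II,\III,\IV$ analogues) are consistent with the three boundary paths of Table~\ref{tab:inequalities-around-face} in the sense that no directed cycle is created inside $f$ and that every sub-cell inherits a boundary source and sink. This is a finite check driven by Figure~\ref{fig:ineq-around-face2}, but it is the step where an error would hide; everything else is the by-now-standard ``minimal directed cycle in a plane digraph with one source and one sink per face'' topological lemma already invoked for Theorem~\ref{thm:acyclic-and-trees}.
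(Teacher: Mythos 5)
Your overall strategy coincides with the paper's: embed $D_r$ in the plane by placing each face-vertex $f$ inside the corresponding region of $G$ (so that $f$ is cut into three sub-cells), invoke the ``nesting-minimal directed cycle is facial or encloses a source/sink'' lemma already used for Theorem~\ref{thm:acyclic-and-trees}, and then verify (a) that every bounded face of the drawing carries a local source and sink and (b) that $v_2$ and $v_1$ are the only source and sink. Part (a) you leave as a finite check, which is exactly what the paper carries out for the three sub-cells $f_u,f_v,f_w$ (using Lemma~\ref{lem:source-or-sink} for the cell at the blue sink), so no complaint there beyond the fact that it is deferred.

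The genuine gap is in your part (b). You claim that for a non-special vertex $v$ the outgoing red edge $e_r(v)$ contributes a $D_r$-edge in one direction at $v$ and that ``the matched-angle incoming red/blue edges supply the opposite orientation.'' This is wrong in sign and, worse, possibly vacuous. Take $t_r(v)=\oplus$: rule (i) makes $e_r(v)$ outgoing at $v$ in $D_r$; but for an incoming red edge $u\to v$ in the \emph{matched} angle the type rule gives $t_r(u)\neq t_r(v)$, hence $t_r(u)=\ominus$ and rule (ii) orients it $v\to u$, i.e.\ \emph{also outgoing} at $v$; and every incoming blue edge $u\to v$ is oriented $v\to u$ by rule (iv) since $t_r(v)=\oplus$. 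Only an \emph{unmatched}-angle incoming red edge yields an incoming $D_r$-edge at $v$, and such an edge need not exist (a degree-$2$ vertex has no incoming edges at all, and the orientation of its outgoing blue edge in $D_r$ depends on $t_r$ of the \emph{other} endpoint, which the type rule does not control). So your local argument does not exclude that some vertex of $G$ is a source or sink of $D_r$. The paper closes exactly this hole differently: for each inner vertex $v$ it takes the unique inner face $f$ with $(v,f)\in T\setminus M$, observes that $v$ is then a non-distinguished boundary vertex of $f$, and reads off from Table~\ref{tab:inequalities-around-face} together with the three inequality edges of Table~\ref{tab:face-edges} that $v$ has one incoming and one outgoing edge already in the gadget of $f$ --- i.e.\ the auxiliary vertex--face edges, not the $G$-edges at $v$ alone, are what guarantee the absence of internal sources and sinks. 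You should replace your parenthetical justification by this face-based argument (or an equivalent one); as stated, that step would fail.
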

\begin{proof}
 First note that $D_r$ and $D_b$ are planar. More precisely, either graph inherits a plane embedding from $G$ by putting a vertex for each inner face $f$ into the corresponding bounded region and connecting it by three edges to some of its incident vertices. This way $f$ is divided into three inner faces $f_u, f_v, f_w$, each corresponding to a different distinguished vertex of $f$, that is $u,v,w$ are incident to $f_u,f_v,f_w$, respectively. See Figure~\ref{fig:ineq-around-face2} for an illustration.

 To prove that $D_r$ is acyclic, it now suffices to show that every inner face of $D_r$ is acyclic, the special vertex $v_2$ is the only vertex with only outgoing edges and $v_1$ is the only vertex with only incoming edges. Similarly we want to show that every inner face of $D_b$ is acyclic, $v_1$ is the unique source in $D_b$, and $v_2$ the unique sink.

 Let us consider only $D_r$, since an analogous argumentation holds for $D_b$. Every inner face of $D_r$ is one of the three faces $f_u,f_v,f_w$ that correspond to an inner face $f$ of $G$. We consider $f$, its three distinguished vertices $u,v,w$ and assume w.l.o.g. that $t(v) = \I$. The cases that $t(v) \in \{\II,\III,\IV\}$ are similar. We want to show that each of $f_u,f_v,f_w$ is acyclic, i.e., contains a vertex whose two incident edges in that face are either both incoming or both outgoing:

 \begin{enumerate}[label=(\roman*)]
 \item The face $f_u$ contains the edges $f \to u_i$ (or $f \to u$) and $f \to v_1$ (or $f \to u$)\footnote{If neither $u_i$ nor $v_1$ exists, then $f_u$ consists only of two parallel edges directed from $f$ to $u$.} In particular, both edges at the vertex $f$ are outgoing and thus $f_u$ is acyclic.

 \item The face $f_w$ is a quadrangle consisting of the vertices $w$, $v_1$ (or $u$), $f$, and $w_j$ (or $v$). The two edges incident to $w$ are its two edges in the face $f$ of $G$. Thus by Lemma~\ref{lem:source-or-sink} $f_w$ is acyclic.

 \item The face $f_v$ contains the edges $f \to u_i$ and $v \to u_i$ if $u_i$ exists, and the edges $f \to u$ and $v \to u$ if $u_i$ does not exist. Thus both edges at $u_i$ or $u$ are incoming, and $f_v$ is acyclic.
 \end{enumerate}

 \noindent
 It remains to show that every vertex different from $v_1,v_2$ has at least one incoming and one outgoing edge in $D_r$. This is true by definition for vertices that correspond to inner faces of $G$. For every inner vertex $v$ of $G$ consider the inner face $f$ of $G$ such that $(v,f)$ is in the angular structure but not in the angular matching $M$. This means that $v$ is in the set $\{u_1,\ldots,u_i,v_1,\ldots,v_k,w_1,\ldots,w_j\}$ with respect to the face $f$. Now Table~\ref{tab:inequalities-around-face} and the definition of the three edges in $D_r$ incident to $f$ imply that $v$ has one incoming and one outgoing edge locally around the face $f$. Finally, by definition we have $v_2 \to v_3$ and $v_3 \to v_1$ in $D_r$, which concludes the proof.
\end{proof}
Recall that, by the edge rule of the edge labeling (see Section~\ref{sec:edge-labeling}), every non-special edge $u \to v$ is associated with one of its incident faces, such that $v$ is a sink of this face. Let $B_1,B_2,R_1,R_2$ be the four blocks of incoming blue and red edges at $v$, where $B_1$, $e_r(v)$, $B_2$, $R_1$, $e_b(v)$, $R_2$ appear around $v$ in this clockwise circular order. As illustrated in Figure~\ref{fig:Labeling}(d), each face within $R_1$ and $B_1$ is associated with the counterclockwise next incident edge and each face within $R_2$ and $B_2$ with the clockwise next incident edge.

\begin{lemma}\label{lem:ineq-around-vertex}
 Let $u \to v$ be a blue (red) edge and $f$ be the face associated with it. Then $D_r$ ($D_b$) contains the edge $u \to f$ if $t_r(v) = \oplus$ ($t_b(v) = \oplus$) and the edge $f \to u$ if $t_r(v) = \ominus$ ($t_b(v) = \ominus$).
\end{lemma}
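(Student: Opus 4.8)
The plan is to verify Lemma~\ref{lem:ineq-around-vertex} by a careful but routine case analysis, using the bijective correspondence (set up just before the lemma) between incoming edges at $v$ and the faces associated with them, together with the defining rules for the edges of $D_r$ and $D_b$ between vertices and faces of $G$ (Table~\ref{tab:face-edges} and the accompanying text). The key observation is that each incoming blue edge $u \to v$ and its associated face $f$ occur together in the local picture around $f$ as one of the pairs listed in Table~\ref{tab:inequalities-around-face}/Figure~\ref{fig:ineq-around-face2}: either $u$ is one of $w_1,\ldots,w_j$ (when the face lies in the block $R_1$ or $R_2$ and $v$ is the blue sink of $f$), or $u$ is one of $u_1,\ldots,u_i$, or $v_1,\ldots,v_k$, with $v$ playing the role of one of $u,v,w$ for $f$. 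So the statement reduces to: for each of the four types $t(v)\in\{\I,\II,\III,\IV\}$, check that the edge of $D_r$ joining $f$ to the appropriate $u$-vertex points from $u$ to $f$ exactly when $t_r(v)=\oplus$.

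First I would fix a blue edge $u\to v$ and let $f$ be its associated face; by the edge rule $v$ is the blue sink of $f$. I would then locate $u$ and $v$ in the boundary labeling $u, u_1,\ldots,u_i, v, w_1,\ldots,w_j, w, v_1,\ldots,v_k$ of $f$. Since $v$ is the blue sink, $v$ is one of the two distinguished sink vertices of $f$; whether $v$ is the vertex called ``$u$'' or the vertex called ``$w$'' in that list depends on the parity (oddness/evenness) of the vertex matched to $f$, via Lemma~\ref{lem:types-around-face} and the face rule. In either case, $u$ is a boundary vertex adjacent to $v$ along $f$, hence $u$ is one of $u_i$, $w_1$, $w_j$, or $v_1$ (or, in the degenerate case, $u$ itself coincides with a distinguished vertex, handled by the substitution convention stated after Table~\ref{tab:face-edges}). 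Now I would consult Table~\ref{tab:face-edges}: for each type of $f$, exactly one of the three $D_r$-edges incident to $f$ touches that neighbor of $v$, and its orientation is listed there. Comparing this listed orientation against the sign $t_r(v)$ — using Lemma~\ref{lem:parity-type} to relate $t_r(v)$ and $t_b(v)$ when needed, and Lemma~\ref{lem:source-or-sink} to handle the two edges at the sink uniformly — gives $u\to f$ precisely when $t_r(v)=\oplus$ and $f\to u$ precisely when $t_r(v)=\ominus$. The red case is symmetric, swapping the roles of $D_r\leftrightarrow D_b$, red$\leftrightarrow$blue, and $t_r\leftrightarrow t_b$.

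The bookkeeping that must be done carefully is the matching between ``the face associated with an incoming edge'' (defined via the clockwise/counterclockwise rule around $v$ in Figure~\ref{fig:Labeling}(d), splitting the incoming edges into the four blocks $B_1,B_2,R_1,R_2$) and ``the position of $f$ around $v$'' in Table~\ref{tab:inequalities-around-face}. Concretely: a face in block $R_1$ (respectively $R_2$) is associated with the counterclockwise-next (respectively clockwise-next) edge, and one has to check that this is exactly the neighbor of $v$ whose $D_r$-edge to $f$ is governed by $t_r(v)$ as claimed. I expect this alignment of the two indexing conventions to be the only real obstacle; once it is pinned down, the orientations can simply be read off Table~\ref{tab:face-edges} and matched against the sign of $v$ in all four cases. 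I would present it as: (1)~$v$ is the blue sink of $f$, so by Lemma~\ref{lem:source-or-sink} both edges of $D_r$ at the two boundary vertices flanking $v$ on $f$ point towards $v$'s side iff $t_r(v)=\ominus$; (2)~$u$ is one of those two flanking vertices, and the $D_r$-edge between $u$ and $f$ is, by Table~\ref{tab:face-edges}, oriented opposite to (the edge of $f$ at $u$ towards $v$) in a way consistent with Lemma~\ref{lem:source-or-sink}; hence $u\to f$ iff $t_r(v)=\oplus$. The degenerate sub-cases where $u_1,u_i,v_1,v_k,w_1,w_j$ fail to exist are absorbed by the substitution rule already introduced, and the parallel-edge cases it produces are consistent with the same orientation statement.
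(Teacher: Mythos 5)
Your reduction of the statement to a face-local check is reasonable, and it is roughly the opposite viewpoint to the paper's (the paper argues around the vertex $v$ via the blocks $B_1,B_2$, you argue around the face $f$ via Table~\ref{tab:face-edges}). But there is a genuine gap at exactly the point you flag as ``the only real obstacle'': you never determine \emph{which} of the two neighbours of the blue sink on $f$ the vertex $u$ actually is, and that is where all the content of the lemma lives. Concretely, say the vertex matched to $f$ has type $\I$, so in the face labelling the blue sink is $w$ with flanking vertices $w_j$ and $v_1$, and Table~\ref{tab:face-edges} gives $w_j \to f$ but $f \to v_1$ in $D_r$ --- two \emph{opposite} orientations, both fixed by the type of the matched vertex and independent of $t_r$ of the sink. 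So reading the table tells you nothing until you know whether $u=w_j$ or $u=v_1$, and your step~(2) cannot supply this: Lemma~\ref{lem:source-or-sink} only says that the two face-edges of $G$ at the sink are \emph{both} incoming or \emph{both} outgoing in $D_r$ according to $t_r(v)$, so it is symmetric in the two flanking vertices and cannot distinguish them. The claim that the $(u,f)$-edge is ``oriented opposite to the edge of $f$ at $u$ towards $v$'' is not something the table gives you; it holds in one of the two flanking positions and fails in the other, so invoking it presupposes the answer.

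What is actually needed, and what the paper's proof supplies, is a link between (a)~the side from which the associated blue edge enters the sink and (b)~the sign $t_r(v)$. The paper gets this by combining the type rule with the parity of $v$ (an incoming blue edge lies in $B_1$ iff it lies in the matched/unmatched angle according as $v$ is odd/even, whence $t_r(v) \neq t_b(u)$ for $u \to v \in B_1$ and $t_r(v) = t_b(u)$ for $u \to v \in B_2$), and then with Lemma~\ref{lem:types-around-face} to identify $t_b(u)$ with the blue sign of the vertex matched to $f$; only then does the rotational direction of the $D_r$-edges around the blue sink, and hence the row of Table~\ref{tab:face-edges} that applies to $u$, get pinned down. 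Your outline cites the right auxiliary lemmas but omits this derivation, so as written the proof does not go through; you would need to add the block-versus-angle-versus-sign computation (or an equivalent) for each of the four types.
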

\begin{proof}
We prove the statement only for a blue edge $u \to v$, i.e., $v$ is the blue sink of the face $f$. The argument for red edges is analogous.

Consider the red sign of $v$ and the blue sign of $u$. From the type rule follows that $t_r(v) \neq t_b(u)$ if $u \to v \in B_1$ and $t_r(v) = t_b(u)$ if $u \to v \in B_2$. Indeed, if $v$ is odd, i.e., $t_r(v) = t_b(v)$, then $u \to v$ lies in $B_1$ if and only if $u \to v$ lies in the matched angle of $v$, which is the case if and only if $t_b(u) \neq t_b(v) = t_r(v)$. Similarly, if $v$ is even, i.e., $t_r(v) \neq t_b(v)$, then $u \to v$ lies in $B_1$ if and only if $u \to v$ lies in the \emph{un}matched angle of $v$, which is the case if and only if $t_b(u) = t_b(v) \neq t_r(v)$.

If $w$ denotes the vertex that the face $f$ is matched to, then from Lemma~\ref{lem:types-around-face} follows $t_b(w) = t_b(u)$. If $t_b(w) = \oplus$ (and hence $t_r(v) = \ominus$), then the edges in $D_r$ between vertices and faces of $G$ are directed counterclockwise around the blue sink of $f$ (which is $v$). Now by the edge rule of the edge labeling (see Section~\ref{sec:edge-labeling}), $u$ comes counterclockwise before $v$ on $f$ if and only if $u \to v$ lies in $B_2$. Thus we have the edge $u \to f$ in $D_r$ iff $u \to v \in B_2$, which is the case iff $t_r(v) = t_b(u) = \ominus$.

Figure~\ref{fig:ineq-around-vertex} shows how some edges around $v$ are directed in $D_b$.
\begin{figure}[htb]
\centering
\includegraphics{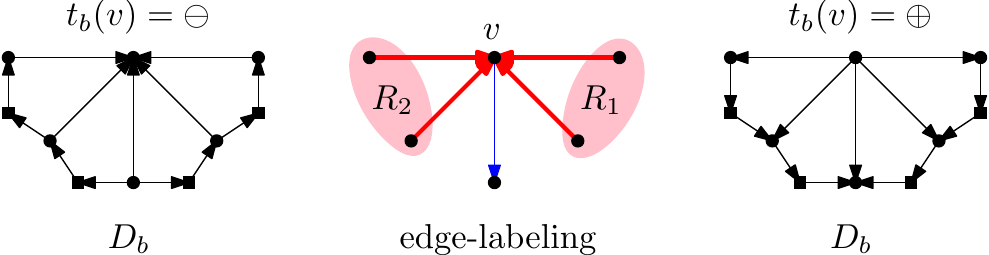}
\caption{The inequality graph $D_b$ around a vertex $v$.}
\label{fig:ineq-around-vertex}
\end{figure}
\end{proof}

\subsection{Construction}\label{sec:construction}


Given a planar Laman graph $G$, an L-contact representation of $G$ is constructed as follows:
\begin{enumerate}[label = (\arabic*)]
 \item Find a planar Henneberg construction for $G$.\label{enum:find-Henneberg}
 \item Compute an angular tree $T$ of $G$ (Theorem~\ref{thm:tree-if-Laman}).\label{enum:angular-structure}
 \item Compute the angle and edge labeling of $G$ w.r.t. $T$ (Theorem~\ref{thm:angles-from-structure} and~\ref{thm:edge-from-angle}).\label{enum:angle-edge-labeling}
 \item Compute the type of every vertex of $G$ according to the type rule in Section~\ref{sec:types}. This can be computed using a simple traversal of the trees $E_r$ and $E_b$.\label{enum:types}
 \item Define the directed graphs $D_r$ and $D_b$ as described in Section~\ref{sec:inequalities}\label{enum:ineq-graphs}
 \item Compute a topological order of $D_r$ and $D_b$ and let, for every vertex $v$ in $G$, $x(v)$ and $y(v)$ be the number of $v$ in these topological orders, respectively.\label{enum:topological-order}
 \item For every non-special vertex $v$ with $v \to u$ in $E_r$ and $v \to w$ in $E_b$ define an L-shape $\LL(v)$ whose horizontal leg spans from $x(v)$ to $x(u)$ on $y$-coordinate $y(v)$ and whose vertical leg spans from $y(v)$ to $y(w)$ on $x$-coordinate $x(v)$.\label{enum:define-L}
\end{enumerate}
Let $n$ be the number of vertices of $G$. By Theorem~\ref{thm:tree-if-Laman} we can compute an angular tree of $G$ in $\mathcal{O}(n^2)$ time. The angle labeling w.r.t. $T$ can be computed in $\mathcal{O}(n)$ time using the linear time algorithm of de Fraysseix and Ossona de Mendez~\cite{deFraysseix200157}. Similarly, the edge labeling w.r.t. $T$ can be computed by a simple traversal of the tree $H$ described in the proof of Theorem~\ref{thm:edge-from-angle}. It is easy to see that the remaining steps of our algorithm can also be computed in $\mathcal{O}(n)$ time. Finally note that the vertices of $D_r$ and $D_b$ that correspond to inner faces of $G$ do not need to be included in the topological order of $D_r$ and $D_b$. Hence every coordinate used in the L-contact representation is between $1$ and $n$.

Before we prove the correctness and runtime of the above algorithm let us refer to Figure~\ref{fig:BigExample} for a detailed example showing all the important structures that are computed during the algorithm.

\begin{figure}[h!]
\centering
\includegraphics[width=.95\textwidth]{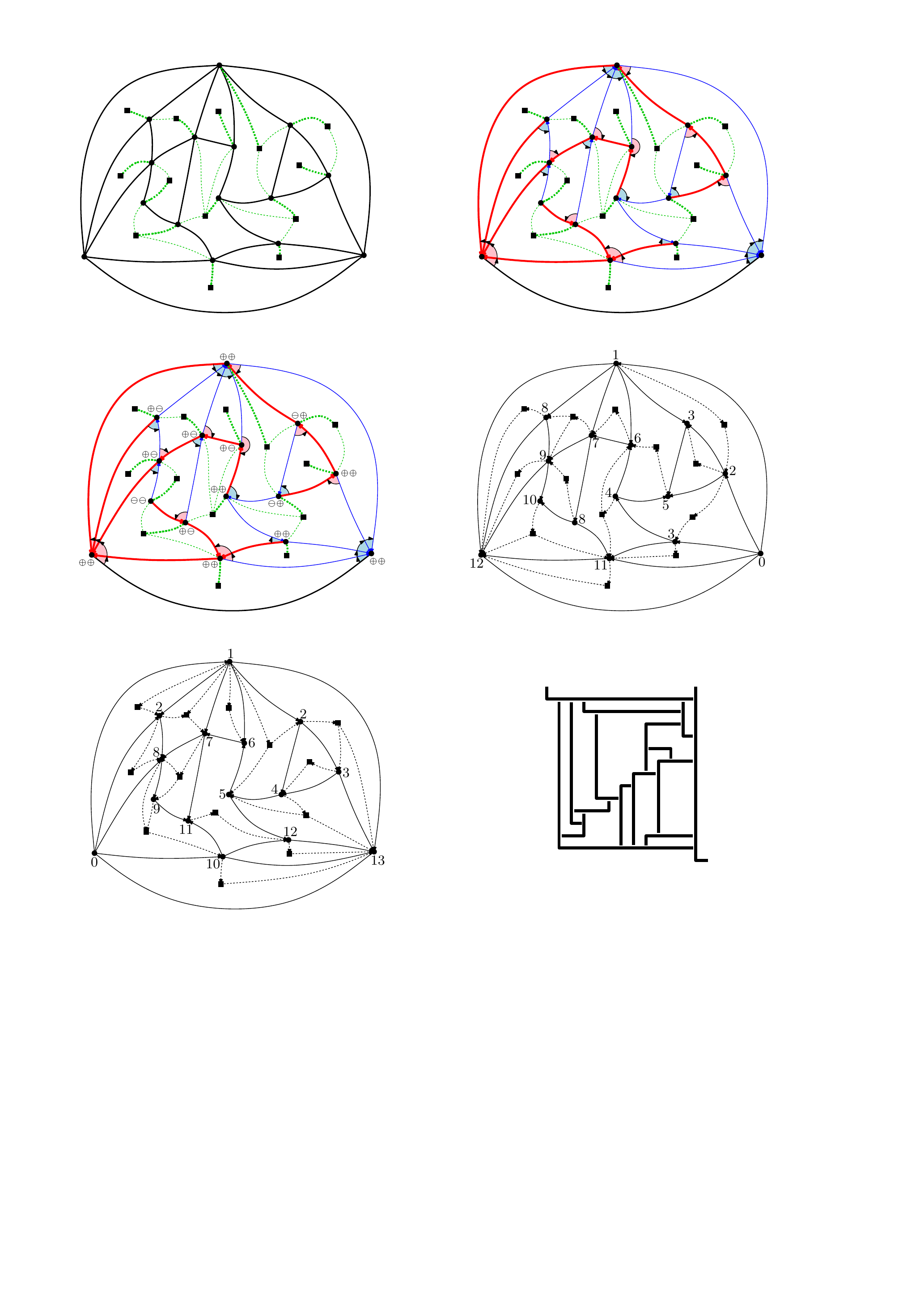}
\caption{Left top: angular tree, and the corresponding matching (thick). Right top: edge labeling corresponding to angular tree. Middle left: vertex types. Middle right: inequality graph $D_r$ plus $x$-coordinates. Bottom left: inequality graph $D_b$ plus $y$-coordinates. Bottom right: L-contact representation.}
\label{fig:BigExample}
\end{figure}

\begin{theorem}\label{thm:LContactRep}
The algorithm above computes an L-contact representation of $G$ on an $n \times n$ grid in $\mathcal{O}(n^2)$ time, where $n$ is the number of vertices of $G$. If an angular tree is given, then the algorithm runs in $\mathcal{O}(n)$~time.
\end{theorem}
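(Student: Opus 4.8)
The plan is to prove correctness by establishing three things in turn: (i) the L-shapes defined in step~\ref{enum:define-L} are well-defined and pairwise non-crossing; (ii) two L-shapes make contact exactly when the corresponding vertices are adjacent in $G$; and (iii) the coordinates lie in $\{1,\dots,n\}$, so the representation fits on the $n\times n$ grid, and that the whole computation takes $\mathcal{O}(n^2)$ time (or $\mathcal{O}(n)$ given an angular tree). The runtime bound and the grid bound are essentially already argued in the paragraph preceding the theorem --- step~\ref{enum:angular-structure} costs $\mathcal{O}(n^2)$ by Theorem~\ref{thm:tree-if-Laman}, everything else is $\mathcal{O}(n)$ (linear-time separating decomposition of de~Fraysseix--Ossona de~Mendez for the angle labeling, a tree traversal for the edge labeling, a tree traversal for the types, topological sort for $D_r,D_b$ which are acyclic by Lemma~\ref{lem:ineq_acyclic}), and omitting the face-vertices from the topological orders keeps all coordinates in $[1,n]$. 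So the real content is correctness of the geometry.

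First I would check that each $\LL(v)$ is a genuine (non-degenerate) L-shape. For a non-special vertex $v$ with $e_r(v)=v\to u$ and $e_b(v)=v\to w$, the edges $v\to u$ and $v\to w$ both appear in $D_r$ and $D_b$ (both graphs contain all edges of $G$), and the orientation rules together with the sign $t_r(v),t_b(v)$ force $x(v)\neq x(u)$ and $y(v)\neq y(w)$; more precisely, using Lemma~\ref{lem:source-or-sink} and the per-face edge directions from Table~\ref{tab:inequalities-around-face}, the direction of $v\to u$ in $D_r$ and of $v\to w$ in $D_b$ matches the prescribed type $t(v)$, so the horizontal leg runs in the direction dictated by $t_r(v)$ and the vertical leg in the direction dictated by $t_b(v)$. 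For $v_1,v_2,v_3$ one checks the base case directly. Next, non-crossing: two L-shapes can only interfere inside a common incident face, so I would argue face by face. Fix an inner face $f$ with distinguished vertices $u,v,w$ as in Section~\ref{sec:types}; Lemma~\ref{lem:types-around-face} tells us the types of all vertices around $f$ (type $t-1$ on the $u$-side, $t$ on the $v$-side, $t+1$ on the $w$-side, where $t=t(v)$), and Table~\ref{tab:inequalities-around-face} plus Table~\ref{tab:face-edges} give a consistent orientation of all edges of $D_r,D_b$ incident to $f$ and to the face-vertex $f$. From the topological orders one then reads off that the legs of the L-shapes of the boundary vertices of $f$, truncated at their contacts, bound a simple rectilinear region (the auxiliary point $(x(f),y(f))$ sits strictly inside it by the inequality edges of Table~\ref{tab:face-edges}), so no two of these L-shapes cross inside $f$. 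Since every potential crossing of $\LL(a),\LL(b)$ occurs in a face incident to both, this gives global non-crossing.

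Then I would verify the contact/adjacency correspondence. If $(u,v)\in E(G)$, say $u\to v\in E_r$, then by construction the horizontal leg of $\LL(u)$ ends at $x(v)$ on height $y(u)$, i.e., at the point $(x(v),y(u))$, which lies on the vertical leg of $\LL(v)$ --- this requires checking that $y(u)$ is strictly between $y(v)$ and $y(w')$ where $v\to w'\in E_b$, which follows from the orientations of the edges of $D_b$ around $v$ given by Lemma~\ref{lem:ineq-around-vertex} and the face-edge table: the face $f$ associated with $u\to v$ by the edge rule sandwiches $u$ between $v$ and $w'$ in the appropriate direction. Conversely, if $\LL(u)$ and $\LL(v)$ touch, the contact point is an endpoint of one leg of one of them lying in the interior of a leg of the other; by step~\ref{enum:define-L} the only endpoints of $\LL(u)$ that are not bottommost/topmost/leftmost/rightmost are the two ends of its legs at $x(\cdot)$-values $x(u')$ and $y(\cdot)$-values $y(w')$ coming from $e_r(u)=u\to u'$ and $e_b(u)=u\to w'$, so a contact forces $v\in\{u',w'\}$, hence $(u,v)\in E(G)$. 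Edges incident to $v_1$ or $v_2$ and the special edge need a separate, easy check using $t_b(v_1)=\oplus$, $t_r(v_2)=\oplus$ and the directions $v_2\to v_1$ in $D_r$, $v_1\to v_2$ in $D_b$.

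The main obstacle I expect is the bookkeeping in the non-crossing step: one must show that for \emph{every} inner face $f$ and \emph{every} choice of type $t(v)\in\{\I,\II,\III,\IV\}$, the truncated legs of the incident L-shapes, in the orders forced by the topological sorts of $D_r$ and $D_b$, close up into a simple polygon with the auxiliary point in its interior. This is where Tables~\ref{tab:inequalities-around-face} and~\ref{tab:face-edges}, Lemma~\ref{lem:source-or-sink}, Lemma~\ref{lem:types-around-face}, and Lemma~\ref{lem:ineq-around-vertex} all have to be combined, and the degenerate cases (when some of $u_1,u_i,v_1,v_k,w_1,w_j$ are missing, with the substitution rule and possible parallel edges in $D_r,D_b$) have to be handled. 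By symmetry it suffices to treat $t(v)=\I$ in detail and remark that the other three types follow by $90^\circ$ rotation. Everything else --- non-degeneracy of each individual L-shape, the contact-implies-edge and edge-implies-contact directions, the grid size, and the running time --- is then a routine consequence of the lemmas already proved, and I would present those parts briefly.
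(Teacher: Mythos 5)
Your plan follows the paper's proof almost step for step: the runtime and grid bounds are taken from the discussion preceding the theorem, non-degeneracy of each $\LL(v)$ and the edge-to-contact direction are derived from Lemma~\ref{lem:ineq-around-vertex} exactly as in the paper's Claim~1, and the face-by-face analysis with the auxiliary point $(x(f),y(f))$ preventing the boundary path from self-intersecting is the paper's Claim~3. Your converse direction (contact implies edge, via injectivity of the coordinates coming from the topological orders) is a reasonable addition that the paper leaves implicit.

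The one genuine gap is the sentence ``Since every potential crossing of $\LL(a),\LL(b)$ occurs in a face incident to both, this gives global non-crossing.'' That localization is precisely the non-trivial global step, and you assert it rather than prove it: a priori, two L-shapes belonging to vertices that share no face could still overlap, since at that point in the argument nothing ties the geometric arrangement of the L-shapes to the plane embedding of $G$. The paper closes this by first proving (its Claim~2) that around every $\LL(v)$ the contacts appear in the same cyclic order as the edges around $v$ in the plane embedding --- using the type rule together with Lemma~\ref{lem:ineq-around-vertex} --- and then observing that the embedding induced by the touching L-shapes has the correct rotation scheme and crossing-free faces, so by $2$-connectedness it must coincide with the given plane embedding of $G$; only then does global non-crossing follow. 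You gesture at the cyclic-order fact when discussing the ordering of contacts along a leg, but you never isolate it as a claim nor invoke $2$-connectivity, so your face-by-face analysis does not yet yield global non-crossing. Adding the rotation-scheme claim and the concluding embedding argument would complete the proof along the paper's lines.
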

\begin{proof}
 The running time of the algorithm and the size of the drawing have already been argued in Section~\ref{sec:construction}. It remains to show that the constructed L-shapes indeed form an L-contact representation of $G$. First note that an L-shape $\LL(v)$ is of type $t$ if and only if the corresponding vertex has type $t$. To see this, consider a non-special vertex $v$ of $G$ with $t_r(v) = \oplus$. Let $u$ be its outgoing red neighbor in the edge labeling, that is, in $E_r$ we have the edge $v \to u$. By definition (case (i)) $D_r$ contains the edge $v \to u$ and thus $x(v) < x(u)$. This means that the vertical leg of $\LL(u)$ lies to the right of the vertical leg of $\LL(v)$. In particular $\LL(v)$ is either of type I or IV, as desired. Similarly assume that $t_b(v) = \ominus$ and consider the outgoing blue neighbor $w$ of $v$. Then by definition (case (ii)) $D_b$ contains the edge $w \to v$, which implies that $\LL(v)$ has type III or IV. Thus if $t(v) = \IV$ then $\LL(v)$ is of type IV. The cases $t(v) \in \{\I,\II,\III\}$ are similar.

\smallskip\noindent
 The rest of the proof is divided into several claims.

 \medskip
 \noindent
 \textit{Claim~1. All edges in $G$ are represented by non-degenerate point contacts of the corresponding L-shapes.}

 \smallskip\noindent
 \textit{Proof.} W.l.o.g. consider a red edge $u \to v \in E_r$. The horizontal endpoint of $\LL(u)$ is given by $(x(v),y(u))$ and the vertical leg of $\LL(v)$ is supported by the line $x = x(v)$, but it remains to show that $(x(v),y(u))$ lies on the vertical leg of $\LL(v)$, i.e., $y(v) < y(u) < y(w)$ or $y(v) > y(u) > y(w)$, where $w$ is the outgoing blue neighbor of $v$.

  If $t_b(v) = \oplus$, then by Lemma~\ref{lem:ineq-around-vertex} there is a directed path in $D_b$ from $v$ via $u$ to the outgoing blue neighbor $w$ of $v$ and hence $y(v) < y(u) < y(w)$. Similarly if $t_b(v) = \ominus$, then there is a directed path in $D_b$ from $w$ via $u$ to $v$ and hence $y(v) > y(u) > y(w)$, which is what we wanted to show.
 \hfill $\triangle$ \textit{Claim~1.}

 \medskip
 \noindent
 \textit{Claim~2. Going around $\LL(v)$ the contacts with other L-shapes appear in the same cyclic order as the incident edges of $v$ in the plane embedding of $G$.}

 \smallskip\noindent
 \textit{Proof.} Consider the vertex $v$ in the edge labeling and assume w.l.o.g. that $t(v) = \I$, i.e., $\LL(v)$ is of type I. Recall that $R_1,R_2,B_1,B_2$ denote the blocks of incoming red and blue edges around $v$ in this clockwise cyclic order. By the type rule we know that $t_b(u) = \ominus$ for each $u \to v \in B_1$, $t_b(u) = \oplus$ for each $u \to v \in B_2$, $t_r(u) = \oplus$ for each $u \to v \in R_1$, and $t_r(u) = \ominus$ for each $u \to v \in R_2$. Since the types of L-shapes match the types of the vertices they represent, we get that each $\LL(u)$ makes contact with $\LL(v)$ on the correct side of the correct leg of $\LL(v)$, e.g., $\LL(u)$ touches the vertical leg of $\LL(v)$ from the left for $u \to v \in R_1$ and so on.

 It remains to show that within each block the contacts appear in the same cyclic order as the corresponding edges in the plane embedding of $G$. Consider any block, say $R_1$, and still assume that $t_b(v) = \oplus$. Let $w$ be the outgoing blue neighbor of $v$. Then by Lemma~\ref{lem:ineq-around-vertex} there is a directed path in $D_b$ starting at $v$, going through all incoming red neighbors in $R_1$ in clockwise order, and ending at $w$ (see Figure~\ref{fig:ineq-around-vertex}). In other words, the y-coordinates of $\LL(v)$, the L-shapes in $R_1$ in clockwise order, and $\LL(w)$ are increasing, which is what we wanted to show.

 The consideration of $R_2,B_1$ and $B_2$, as well as cases with $t_b(v) \neq \oplus$ are analogous.
 \hfill $\triangle$ \textit{Claim~2.}

 \medskip
 \noindent
 \textit{Claim~3. Every face $f$ of $G$ corresponds to a rectilinear polygonal region whose boundary is contained in the L-shapes corresponding to the vertices of $f$.}

 \smallskip\noindent
 \textit{Proof.}
 It is easy to see that the statement holds for the outer face. So consider any inner face $f$ and let $u,v,w$ be its three distinguished vertices. Let us trace the polygonal path $P$ that is the claimed boundary of the region corresponding to $f$. Start at the bend of $\LL(v)$ and go along the vertical leg on its right side if $v$ is odd and on its left side if $v$ is even. Whenever we meet a contact we turn right for $v$ odd and left for $v$ even, and traverse the other L-shape on the corresponding side. From Claim~2 follows that $P$ is a closed path (corresponding to the inner face $f$).

 \begin{figure}[htb]
  \centering
  \includegraphics{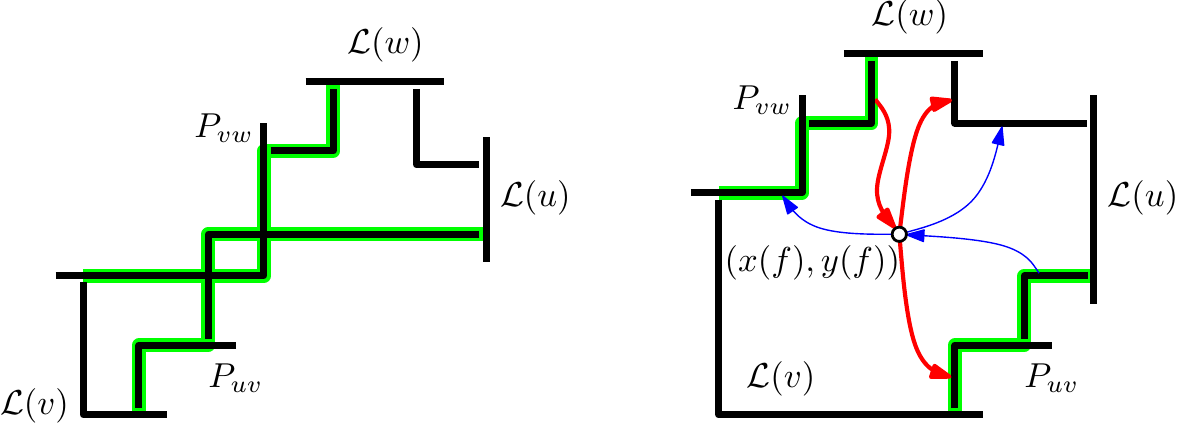}
  \caption{Left: A situation where $P_{vw}$ and $P_{uv}$ intersect. Right: The introduction of the point $(x(f),y(f))$ forces $P_{vw}$ and $P_{uv}$ to be disjoint.}
  \label{fig:face-region}
 \end{figure}

 The type of each vertex of $f$ (except for $u$ and $w$) is given by Lemma~\ref{lem:types-around-face}. Moreover, each such vertex has an edge with $f$ in the angular structure, which means that the bend of the corresponding L-shape is on $P$. Finally, from Table~\ref{tab:inequalities-around-face} and Figure~\ref{fig:ineq-around-face2} we see that $P$ is divided into three monotone parts, $P_{vw}$ between $\LL(v)$ and $\LL(w)$, $P_{wu}$ between $\LL(w)$ and $\LL(u)$, and $P_{uv}$ between $\LL(u)$ and $\LL(v)$. So the only thing that could happen is that $P_{vw}$ intersects $P_{uv}$ as illustrated in the left of Figure~\ref{fig:face-region}.

 Recall that we introduced a point $(x(f),y(f)) \in \mathbb{R}^2$ associated with the face $f$. We claim that this point ensures that $P$ is non-crossing. Consider for example the case that $\LL(v)$ has type I. If $P_{vw} \cap P_{uv} \neq \emptyset$ then $x(w_j) > x(u_i)$ and $y(u_1) > y(w_1)$. But by definition we have a path $w_j \to f \to u_i$ in $D_r$ and a path $u_1 \to f \to w_1$ in $D_b$ (see Figure~\ref{fig:face-region} right). Thus $x(w_j) < x(f) < x(u_i)$ and $y(u_1) < y(f) < y(w_1)$, which means that $P$ is indeed not self-intersecting and thus proves the claim.
 \hfill $\triangle$ \textit{Claim~3.}

 \medskip
 \noindent
 Next, we consider the embedding of $G$ inherited from the touching L-shapes (vertices are placed inside the corresponding L and edges are drawn along the L-shapes through the corresponding touching point.). By Claim~2 this embedding has the correct rotation scheme and by Claim~3 every face is crossing-free. Since $G$ is $2$-connected it follows that this embedding is the plane embedding of $G$ we started with. In particular no two L-shapes cross each other. This completes the proof.
%
%
%
\end{proof}
%
%

\section{Future Work and Open Problems}

Using our newly discovered combinatorial structure, we showed that planar Laman graphs are L-contact graphs. Thus, we showed that axis-aligned L's are as "powerful" as segments with arbitrary slopes when it comes to {\em contact representation of planar graphs}~\cite{A+11}. The equivalent result is not true for {\em intersection representation of planar graphs}. Indeed there is no $k$ such that all segment intersection graphs have an intersection representation with axis-aligned paths with no more than $k$ bends each~\cite{ChaplickJKV12}.

We think that L-contact representations can be used in various settings. For example, by ``fattening`` the L's we can get proportional side-contact representations similar to those in~\cite{A+11}.

Several natural open problems follow from our results:
\begin{enumerate}
\item There are L-contact graphs that are not Laman graphs (e.g. $K_4$). All L-contact graphs are planar and satisfy $|E(W)| \leq 2|W| -2$ for all $W \subseteq V$. Are these conditions also sufficient?
\item The L-contact representations resulting from our algorithm use all four types of L-shapes. If we limit ourselves to only type-I L's we can represent planar graphs of tree-width at most $2$, which include outerplanar graphs. What happens if we limit ourselves to only type-I L's {\bf and} allow degenerate L's?
\item Not every edge labeling corresponds to an angular tree. What are the necessary conditions for an edge labeling to have a corresponding (not necessarily proper) L-contact representation?
\item Planar Laman graphs can be characterized by the existence of an angular tree, which we can compute in $\mathcal{O}(n^2)$ time. This is slower than the fastest known algorithm for recognizing Laman graphs, which runs in $\mathcal{O}(n^{3/2}\sqrt{\log n})$ time~\cite{DaescuK09}. Can we compute angular trees faster, as to obtain a faster algorithm for recognizing planar Laman graphs?
\end{enumerate}

\smallskip
\noindent{\bfseries Acknowledgments.} The research in this paper started during the \emph{Bertinoro Workshop on Graph Drawing}. The authors gratefully acknowledge the other participants for useful discussions.

\newpage

\bibliographystyle{abbrv}
{
\bibliography{laman}
}

\end{document}